 \newtheorem{thm}{Theorem}[section]
 \newtheorem{cor}[thm]{Corollary}
 \newtheorem{lem}[thm]{Lemma}
 \newtheorem{prop}[thm]{Proposition}
 \theoremstyle{definition}
 \newtheorem{defn}[thm]{Definition}
 \theoremstyle{remark}
 \newtheorem{rem}[thm]{Remark}
 \newtheorem{ex}{Example}
 \numberwithin{equation}{section}
\newcommand{\ve}{\varepsilon}
\newcommand{\RR}{\mathbb{R}}
\newcommand{\NN}{\mathbb{N}}
\begin{document}

%
%
%
%
%
%
%
%
%

\title[On generalizations of some fixed point theorems]{On generalizations of some fixed point theorems in semimetric spaces with triangle functions \footnote{This is the submitted version. The published version is available at Front. Appl. Math. Stat., 17 June 2024,
Sec. Numerical Analysis and Scientific Computation, Volume 10, 2024. \url{https://doi.org/10.3389/fams.2024.1392560}}}

\author[Evgeniy Petrov]{Evgeniy Petrov}

\address{
Institute of Applied Mathematics and Mechanics\\
of the NAS of Ukraine\\
Slovyansk,Ukraine}

\email{eugeniy.petrov@gmail.com}


\author[Ruslan Salimov]{Ruslan Salimov}

\address{
Institute of Mathematics\\
of the NAS of Ukraine\\
Kiev, Ukraine}

\email{ruslan.salimov1@gmail.com}


\author{Ravindra K. Bisht}
\address{Department of Mathematics\\
National Defence Academy\\
411023 Khadakwasla\\
Pune, India}

\email{ravindra.bisht@yahoo.com}




\subjclass{Primary 47H10; Secondary 47H09}

\keywords{Fixed point theorem, mappings contracting perimeters of triangles, metric space, semimetric space, triangle function}

\begin{abstract}
In the present paper, we prove generalizations of Banach, Kannan, Chatterjea, \'Ciri\'c-Reich-Rus fixed point theorems, as well as of the fixed point theorem for mappings contracting perimeters of triangles. We consider corresponding mappings in semimetric spaces with triangle functions introduced by M. Bessenyei and Z. P\'ales. Such an approach allows us to derive corollaries for various types of semimetric spaces including metric spaces, ultrametric spaces, b-metric spaces etc.
The significance of these generalized theorems extends across multiple disciplines, including optimization, mathematical modeling, and computer science. They may serve to establish stability conditions, demonstrate the existence of optimal solutions, and improve algorithm design.
\end{abstract}

\maketitle

\section{Introduction}
The Contraction Mapping Principle was established by S. Banach in his dissertation (1920) and published in 1922~\cite{Ba22}. Although the idea of successive approximations in a number of concrete situations (solution of differential and integral equations, approximation theory) had appeared earlier in the works of P.~L.~Chebyshev, E.~Picard, R. Caccioppoli, and others, S. Banach was the first to formulate this result in a correct abstract form suitable for a wide range of applications.

In 1968, Kannan's pioneering work in fixed-point theory led to a significant result, independent of the Banach contraction principle \cite{Ka68}. Kannan's theorem provided a crucial characterization of metric completeness: A metric space $X$ is complete if and only if every mapping satisfying Kannan contraction on $X$ has a fixed point \cite{Su74}. This discovery spurred the introduction of numerous contractive definitions, many of which allowed for discontinuity in their domain. Among these contractive conditions, those explored by Chaterjee \cite{Ch72} and \'Ciri\'c-Reich-Rus \cite{CB71,Re71,Ru71} share similar characteristics, further enriching understanding of the properties of contractive mappings in metric spaces. For various contractive definitions, we suggest authors refer to a survey paper by Rhoades \cite{Rh77}.
After a century, the interest of mathematicians around the world in fixed point theorems remains high. This is evidenced by the appearance of numerous articles and monographs in recent decades dedicated to fixed point theory and its applications. For a survey of fixed point results and their diverse applications, see, for example, the monographs~\cite{Ki01, AJS18, Su18}.

Let $X$ be a nonempty set. Recall that a mapping  $d\colon X\times X\to \mathbb{R}^+$, $\mathbb{R}^+=[0,\infty)$ is a \emph{metric} if for all $x,y,z \in X$ the following axioms hold:
\begin{itemize}
  \item [(i)] $(d(x,y)=0)\Leftrightarrow (x=y)$,
  \item [(ii)] $d(x,y)=d(y,x)$,
  \item [(iii)] $d(x,y)\leqslant d(x,z)+d(z,y)$.
\end{itemize}
The pair $(X,d)$ is called a \emph{metric space}. If only axioms (i) and (ii) hold then $d$ is called a \emph{semimetric}. A pair $(X,d)$, where  $d$  is a semimetric on $X$, is called a \emph{semimetric space}. Such spaces were first examined by Fr\'{e}chet in~\cite{Fr06}, where he called them ``classes (E)''. Later these spaces and mappings on them attracted the attention of many mathematicians, see e.g.~\cite{Ch17,Ni27,Wi31,Fr37,DP13,PS22}.

In semimetric spaces, the notions of convergent and Cauchy sequences, as well as completeness, can be introduced in the usual way.\\

The concept of b-metric space was initially introduced by Bakhtin \cite{Ba89} under the name of quasi-metric spaces, wherein he demonstrated a contraction principle in this space. Czerwik \cite{C98,C93} further utilized such space to establish generalizations of Banach's fixed point theorem. In a b-metric space, the triangle inequality (iii) is extended to include the condition that there exists \( K \geq 1 \), ensuring that \( d(x, y) \leq K [d(x, z) + d(z, y)] \) for all \mbox{\( x, y, z \in X \)}. Fagin and Stockmeyer~\cite{FS98} further explored the relaxation of the triangle inequality within b-metric spaces, labeling this adjustment as nonlinear elastic matching (NEM). They observed its application across diverse domains, including trademark shape analysis \cite{CMV94}  and the measurement of ice floes \cite{Mc91} . Xia \cite{X09} utilized this semimetric distance to investigate optimal transport paths between probability measures.

Recall that an \emph{ultrametric} is a metric for which the strong triangle inequality $d(x, y)\leqslant \max \{d(x, z), d(z, y)\}$ holds for all \( x, y, z \in X \). In this case the pair $(X,d)$ is called an \emph{ultrametric space}. Note that, the ultrametric inequality was formulated by F.~Hausdorff in 1934 and ultrametric spaces were introduced by M. Krasner~\cite{Kr44} in 1944.

In 2017, Bessenyei and  P\'ales~\cite{BP17} extended the Matkowski fixed point theorem~\cite{Ma75} by introducing a definition of a triangle function $\Phi \colon \overline{\mathbb{R}}_{+}^2\to \overline{\mathbb{R}}^+$ for a semimetric $d$.  We adopt this definition in a slightly different form, restricting the domain and the range of $\Phi$ by ${\RR}_+^2$ and ${\RR}^+$, respectively.

\begin{defn}\label{d21}
Consider a semimetric space $(X, d)$. We say that $\Phi \colon {\RR}^+\times{\RR}^+ \to {\RR}^+$ is a \emph{triangle function} for $d$ if $\Phi$ is symmetric and {nondecreasing in both of its arguments}, satisfies $\Phi(0,0)=0$ and, for all $x, y, z \in X$, the generalized triangle inequality
\begin{equation}\label{tr}
d(x,y)\leqslant \Phi(d(x,z), d(z,y))
\end{equation}
holds.
\end{defn}

{Obviously, metric spaces, ultrametric spaces and $b$-metric spaces are semimetric spaces with the triangle functions $\Phi(u,v)=u+v$, $\Phi(u,v)=\max\{u,v\}$ and $\Phi(u,v)=K(u+v)$, respectively.}


In~\cite{BP17}, semimetric spaces with so-called basic triangle functions continuous at the origin were investigated. These spaces were termed regular. It was demonstrated that in a regular semimetric space, the topology is Hausdorff, a convergent sequence has a unique limit, and possesses the Cauchy property, among other properties. For further developments in this area, see also~\cite{JT20,VH17,CJT18,BP22,KP22}.\\

In this paper, we revisit several well-known fixed-point theorems, either extending their capabilities by modifying their assumptions or presenting new and innovative proofs. With the help of key Lemma 1.2 and its conclusion, we unveil further results that offer insightful perspectives on the nature of fixed-point theorems, not only within the metric context but also within more general spaces.

Here is the key lemma essential for the subsequent sections.
\begin{lem}\label{lem}
Let $(X,d)$ be a semimetric space with the triangle function $\Phi$
satisfying the following conditions:
\begin{itemize}
\item[1)] The equality
\begin{equation}\label{ee1}
  \Phi(ku,kv) = k\Phi(u,v)
\end{equation}
holds for all $k,u,v\in \RR^+$.
\item[2)] For every $0\leqslant \alpha <1$ there exists $C(\alpha)>0$ such that for every
$p\in \NN^+$ the inequality
\begin{equation}\label{ee2}
  \Phi(1,\Phi(\alpha, \Phi(\alpha^2,....,\Phi(\alpha^{p-1},\alpha^{p}))))\leqslant C(\alpha)
\end{equation}
holds.
\end{itemize}

Let $(x_n)$, $n=0,1,\ldots$, be a sequence in $X$ having the property that there exists $\alpha\in [0,1)$ such that
\begin{equation}\label{e31n0}
d(x_n,x_{n+1})\leqslant \alpha d(x_{n-1},x_{n})
\end{equation}
for all $n\geqslant 1$. Then $(x_n)$ is a Cauchy sequence.
\end{lem}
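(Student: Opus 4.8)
The plan is to show that, uniformly in $m>n$, the distance $d(x_n,x_m)$ is bounded by a quantity that tends to $0$ as $n\to\infty$. The starting point is the contractive hypothesis \eqref{e31n0}, which iterates at once to give $d(x_n,x_{n+1})\leqslant \alpha^n d(x_0,x_1)$ for all $n\geqslant 0$. Writing $d_0:=d(x_0,x_1)$, I would first dispose of the degenerate cases: if $\alpha=0$ or $d_0=0$, then axiom (i) forces $d(x_n,x_{n+1})=0$, hence $x_n=x_{n+1}$, from some index on, so the sequence is eventually constant and trivially Cauchy. Thus I may assume $\alpha\in(0,1)$ and $d_0>0$, and since $d(x_n,x_{n+1})\leqslant\alpha^n d_0\to 0$ already handles consecutive terms, I may also restrict attention to $m\geqslant n+2$.

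The heart of the argument is to bound $d(x_n,x_m)$ by iterating the generalized triangle inequality \eqref{tr} through the intermediate points $x_{n+1},x_{n+2},\dots,x_{m-1}$. Using that $\Phi$ is nondecreasing in each argument, I would substitute the estimates $d(x_{n+j},x_{n+j+1})\leqslant\alpha^{n+j}d_0$ at every level of the nesting, and prove by downward induction on the nesting depth that
\[
d(x_n,x_m)\leqslant \Phi\bigl(\alpha^n d_0,\Phi(\alpha^{n+1}d_0,\dots,\Phi(\alpha^{m-2}d_0,\alpha^{m-1}d_0))\bigr).
\]

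Next I would invoke the homogeneity hypothesis \eqref{ee1}. Factoring $\alpha^n d_0$ out of every argument, by applying $\Phi(ku,kv)=k\Phi(u,v)$ repeatedly from the innermost pair outward, collapses the right-hand side to
\[
\alpha^n d_0\,\Phi\bigl(1,\Phi(\alpha,\Phi(\alpha^2,\dots,\Phi(\alpha^{p-1},\alpha^p)))\bigr),\qquad p:=m-n-1.
\]
Condition \eqref{ee2} is tailored precisely to this expression: it bounds the nested $\Phi$ by a constant $C(\alpha)$ independent of $p$. Hence $d(x_n,x_m)\leqslant \alpha^n d_0\,C(\alpha)$ for all $m\geqslant n+2$. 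Since $\alpha^n\to 0$, this bound is uniform in $m$ and vanishes as $n\to\infty$, and together with the symmetry of $d$ it yields the Cauchy property.

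I expect the only genuine obstacle to be the bookkeeping: setting up the downward induction so that the triangle inequality \eqref{tr}, the contractive estimate, and the monotonicity of $\Phi$ interlock correctly at each level, and then checking that the homogeneity factorization reproduces \emph{exactly} the left-hand side of \eqref{ee2} with the right number of nested terms (largest exponent $p=m-n-1$, which lies in $\NN^+$ once $m\geqslant n+2$). The conceptual content is entirely front-loaded into the two structural hypotheses on $\Phi$, so once the nested inequality is established the conclusion follows immediately.
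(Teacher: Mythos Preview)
Your proposal is correct and follows essentially the same approach as the paper's proof: iterate the contractive hypothesis to get $d(x_n,x_{n+1})\leqslant\alpha^n d(x_0,x_1)$, then chain the generalized triangle inequality through the intermediate points, use monotonicity to substitute these bounds, factor out $\alpha^n d(x_0,x_1)$ via homogeneity~\eqref{ee1}, and conclude with~\eqref{ee2}. Your extra attention to the degenerate cases $\alpha=0$ or $d_0=0$ and to the index bookkeeping (confirming the largest exponent is $p=m-n-1\in\NN^+$) is a welcome addition of rigor, but the structure and key steps are identical.
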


\begin{proof} We break the proof of this lemma into several parts.
1. {Initial bounds}: by~(\ref{e31n0}) we have
$$d(x_1,x_2)\leqslant \alpha d(x_0,x_1), \,\,
d(x_2,x_3)\leqslant \alpha d(x_1,x_2), \,\,
d(x_3,x_4)\leqslant \alpha d(x_2,x_3), \,\,\ldots.$$
Hence, we obtain
\begin{equation}\label{e31n}
d(x_n,x_{n+1})\leqslant \alpha^n d(x_0,x_1).
\end{equation}

2. Use of generalized triangle inequality ~(\ref{tr}):
applying consecutively generalized triangle inequality~(\ref{tr}) to the points $x_n$, $x_{n+1}$, $x_{n+2}$, \ldots, $x_{n+p}$, where $p\in \NN^+$, $p\geqslant 2$, we obtain
$$
d(x_n,\,x_{n+p})\leqslant \Phi(d(x_{n},\,x_{n+1}), d(x_{n+1},\,x_{n+p}))
$$
$$
\leqslant \Phi(d(x_{n},\,x_{n+1}),  \Phi(d(x_{n+1},\,x_{n+2}), d(x_{n+2},\,x_{n+p})))
$$
$$
\ldots
$$
\begin{equation*}
\leqslant \Phi(d(x_{n},\,x_{n+1}),\Phi(d(x_{n+1},\,x_{n+2}),\ldots, \Phi(d(x_{n+p-2},\,x_{n+p-1}), d(x_{n+p-1},\,x_{n+p})))).
\end{equation*}

3. {Utilizing properties of $\Phi$}: by the monotonicity of $\Phi$ and inequalities (\ref{e31n}), we have

$$
d(x_n,\,x_{n+p})\leqslant \Phi(\alpha^{n}d(x_0,x_1),\Phi(\alpha^{n+1}d(x_0,x_1),\cdots,
$$
$$
\Phi(\alpha^{n+p-2}d(x_0,x_1),\alpha^{n+p-1}d(x_0,x_1)))).
$$
Applying several times equality~(\ref{ee1}), we get
$$
d(x_n,\,x_{n+p})\leqslant
\alpha^{n}\Phi(1,\Phi(\alpha,\cdots,\Phi(\alpha^{p-2},\alpha^{p-1})))d(x_0,x_1).
$$

4. {Bounding the expression and concluding Cauchy sequence}: by condition~(\ref{ee2}), we obtain
\begin{equation}\label{ee3}
d(x_n,\,x_{n+p})\leqslant \alpha^{n}C(\alpha)d(x_0,x_1).
\end{equation}
Since $0\leqslant\alpha<1$, we have $d(x_n,\,x_{n+p})\to 0$ as $n\to \infty$ for every $p\geqslant 2$. If $p=1$, then the relation $d(x_n,\,x_{n+1})\to 0$ follows from~(\ref{e31n}). Thus, $(x_n)$ is a Cauchy sequence, which completes the proof.
\end{proof}

\begin{rem}
Let $(X,d)$ be a complete semimetric space, then the sequence $(x_n)$ has a limit $x^*$. If additionally the semimetric $d$ is continuous, then we get $d(x_n,x_{n+p})\to d(x_n, x^*)$ as $p\to \infty$. Hence, letting $p\to \infty$ in~(\ref{ee3}) we get
\begin{equation}\label{ee5}
d(x_n,\,x^*)\leqslant \alpha^{n}C(\alpha)d(x_0,x_1).
\end{equation}
\end{rem}


\section{Banach contraction principle in semimetric spaces}

It is possible to extend the well-known concept of contraction mapping to the case of semimetric spaces. We shall say that a mapping $T\colon X\to X$ is a \emph{contraction mapping} on the semimetric space $(X,d)$ if there exists $\alpha\in [0,1)$ such that
\begin{equation}\label{e3}
d(Tx,Ty)\leqslant \alpha d(x,y)
\end{equation}
for all $x,y \in X$.

\begin{thm}\label{t2}
Let $(X,d)$ be a complete semimetric space with the triangle function $\Phi$ continuous at $(0,0)$ and satisfying conditions~(\ref{ee1}) and~(\ref{ee2}). Let  $T\colon X\to X$ be a contraction mapping. Then $T$ has a unique fixed point.
\end{thm}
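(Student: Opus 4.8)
The plan is to follow the classical Banach argument, but routed through Lemma~\ref{lem} so that the triangle function does the work of the ordinary triangle inequality. First I would fix an arbitrary starting point $x_0\in X$ and define the Picard iteration $x_{n+1}=Tx_n$. Applying the contraction condition~(\ref{e3}) to the pair $(x_{n-1},x_n)$ gives
\begin{equation*}
d(x_n,x_{n+1})=d(Tx_{n-1},Tx_n)\leqslant \alpha\, d(x_{n-1},x_n)
\end{equation*}
for all $n\geqslant 1$, which is precisely hypothesis~(\ref{e31n0}) of Lemma~\ref{lem}. Since $(X,d)$ is complete and $\Phi$ satisfies~(\ref{ee1}) and~(\ref{ee2}), the lemma immediately yields that $(x_n)$ is Cauchy, hence convergent to some limit $x^*\in X$. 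This disposes of the existence-of-a-candidate part essentially for free.

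The next step is to show $x^*$ is actually a fixed point, and here is where I expect the main obstacle to lie. In the standard metric proof one writes $d(x^*,Tx^*)\leqslant d(x^*,x_{n+1})+d(x_{n+1},Tx^*)=d(x^*,x_{n+1})+d(Tx_n,Tx^*)\leqslant d(x^*,x_{n+1})+\alpha\,d(x_n,x^*)$ and lets $n\to\infty$. In the semimetric setting the first inequality must be replaced by the generalized triangle inequality, giving
\begin{equation*}
d(x^*,Tx^*)\leqslant \Phi\bigl(d(x^*,x_{n+1}),\,\alpha\, d(x_n,x^*)\bigr).
\end{equation*}
To pass to the limit I would invoke the continuity of $\Phi$ at $(0,0)$: as $n\to\infty$ both arguments $d(x^*,x_{n+1})$ and $\alpha\,d(x_n,x^*)$ tend to $0$ (convergence of $x_n$ to $x^*$), so $\Phi\bigl(d(x^*,x_{n+1}),\alpha\,d(x_n,x^*)\bigr)\to\Phi(0,0)=0$. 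This forces $d(x^*,Tx^*)=0$, i.e.\ $Tx^*=x^*$. The delicate point to get right is that convergence in a semimetric space must be argued carefully, and the hypothesis that $\Phi$ is continuous at the origin is exactly what licenses this limit; without it the argument breaks down, so I would make sure to state where each of the two hypotheses on $\Phi$ is used.

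Finally, for uniqueness I would suppose $x^*$ and $y^*$ are both fixed points and apply~(\ref{e3}) directly:
\begin{equation*}
d(x^*,y^*)=d(Tx^*,Ty^*)\leqslant \alpha\, d(x^*,y^*).
\end{equation*}
Since $0\leqslant\alpha<1$, this gives $(1-\alpha)\,d(x^*,y^*)\leqslant 0$, whence $d(x^*,y^*)=0$ and therefore $x^*=y^*$ by axiom~(i) of the semimetric. Note that uniqueness uses only the contraction inequality and the semimetric axioms, not any property of $\Phi$, so it is the cleanest part of the argument. Overall the only genuinely nontrivial ingredient beyond the lemma is the limiting argument for fixed-pointness, and that hinges entirely on continuity of $\Phi$ at $(0,0)$.
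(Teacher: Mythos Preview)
Your proposal is correct and follows essentially the same route as the paper: Picard iterates, Lemma~\ref{lem} for the Cauchy property, the generalized triangle inequality together with continuity of $\Phi$ at the origin for fixed-pointness, and the contraction inequality alone for uniqueness. The only cosmetic difference is that in the fixed-point step the paper first observes that $T$ is continuous (so $x_{n+1}=Tx_n\to Tx^*$) and then applies $\Phi$, whereas you bound $d(x_{n+1},Tx^*)=d(Tx_n,Tx^*)\leqslant\alpha\,d(x_n,x^*)$ directly inside $\Phi$ using its monotonicity; both arrive at $\Phi(\text{something}\to 0,\text{something}\to 0)\to 0$.
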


\begin{proof}
Let $x_0 \in X$ and let $x_n=Tx_{n-1}$, $n=1,2,...$. By~(\ref{e3}) and by Lemma~\ref{lem} $(x_n)$ is a Cauchy sequence and  by completeness of $(X,d)$, this sequence has a limit $x^*\in X$.

Let us prove that $Tx^*=x^*$. It is easy to see that the contraction mappings on semimetric spaces are continuous. {Indeed, let $y_n\to y_0$ as $n\to \infty$, then $d(y_n,y_0)\to 0$ and by~(\ref{e3}) we have $d(Ty_n,Ty_0)\to 0$, i.e., $Ty_n\to Ty_0$.} Since $x_n\to x^*$, by the continuity of $T$ we have $x_{n+1}=T x_n\to Tx^*$.  By triangle inequality~(\ref{tr}) and continuity of $\Phi$ at $(0,0)$ we have
$$
d(x^*,Tx^*)\leqslant \Phi(d(x^*,x_{n}),d(x_{n},Tx^*))
\to 0
$$
as $n\to \infty$, which means that $x^*$ is the fixed point.

Suppose that there exist two distinct fixed points $x$ and $y$. Then $Tx=x$ and $Ty=y$, which contradicts to~(\ref{e3}).
\end{proof}

\begin{cor}\label{c33}
The following assertions hold:
\begin{itemize}
  \item [(i)] (\textbf{Banach contraction principle}) Theorem~\ref{t2} holds for metric spaces, i.e., for semimetric spaces with the triangle function $\Phi(u,v)=u+v$.
  \item [(ii)] The following inequality holds:
  $$
  d(x_n,x^*)\leqslant \frac{\alpha^n}{1-\alpha}d(x_0,x_1).
  $$
\end{itemize}
\end{cor}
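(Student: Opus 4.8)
The plan is to specialize Theorem~\ref{t2} to the triangle function $\Phi(u,v)=u+v$ and then read off the explicit rate of convergence from the Remark following Lemma~\ref{lem}.

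For part (i), I would first verify that $\Phi(u,v)=u+v$ is a triangle function for any metric $d$ in the sense of Definition~\ref{d21}: it is symmetric, nondecreasing in each argument, and satisfies $\Phi(0,0)=0$, while the generalized triangle inequality~(\ref{tr}) reduces to the ordinary metric axiom (iii). Moreover, this $\Phi$ is continuous on all of $\RR^+\times\RR^+$, in particular at $(0,0)$, and the homogeneity condition~(\ref{ee1}) is immediate, since $\Phi(ku,kv)=ku+kv=k(u+v)=k\Phi(u,v)$. Thus the only hypothesis of Theorem~\ref{t2} requiring a genuine computation is~(\ref{ee2}).

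The key step is to evaluate the nested expression in~(\ref{ee2}). By associativity and commutativity of addition, the left-hand side collapses into a partial sum of a geometric series,
$$
\Phi(1,\Phi(\alpha,\Phi(\alpha^2,\ldots,\Phi(\alpha^{p-1},\alpha^p))))=\sum_{k=0}^{p}\alpha^k\leqslant\frac{1}{1-\alpha},
$$
where the bound uses $0\leqslant\alpha<1$. Hence~(\ref{ee2}) holds with the explicit constant $C(\alpha)=\frac{1}{1-\alpha}>0$, all hypotheses of Theorem~\ref{t2} are met, and the existence and uniqueness of the fixed point follow, establishing (i).

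For part (ii), I would invoke the Remark following Lemma~\ref{lem}. Since every metric is continuous in each variable, the hypotheses of that Remark are satisfied, so inequality~(\ref{ee5}), namely $d(x_n,x^*)\leqslant\alpha^nC(\alpha)d(x_0,x_1)$, is available. Substituting the value $C(\alpha)=\frac{1}{1-\alpha}$ obtained above yields precisely $d(x_n,x^*)\leqslant\frac{\alpha^n}{1-\alpha}d(x_0,x_1)$. I do not anticipate any serious obstacle; the only point warranting care is confirming the continuity of the metric so that the Remark may be applied, which is standard.
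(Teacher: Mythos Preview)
Your proposal is correct and follows essentially the same route as the paper: verify that $\Phi(u,v)=u+v$ satisfies~(\ref{ee1}) and continuity at the origin, collapse the nested expression in~(\ref{ee2}) to the partial geometric sum $\sum_{k=0}^{p}\alpha^{k}\leqslant \tfrac{1}{1-\alpha}=C(\alpha)$, and then read off part~(ii) from~(\ref{ee5}). Your version is slightly more explicit in checking that $\Phi$ is indeed a triangle function and that the metric is continuous so the Remark applies, but the argument is otherwise the same.
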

\begin{proof}
\textbf{(i)} It is easy to see that $\Phi$ satisfies equality~(\ref{ee1}) and $\Phi$ is continuous at $(0,0)$. Consider expression~(\ref{ee2}) for such power triangle functions $\Phi$:
$$
1+\alpha+\alpha^{2}+\cdots+\alpha^{(p-1)}+\alpha^{p}.
$$
According to the formula for the sum of infinite geometric series this sum is less than $1/(1-\alpha)=C(\alpha)$ for every finite $p\in \NN^+$, which establishes inequality~(\ref{ee2}).

Assertion \textbf{(ii)} follows directly from~(\ref{ee5}).
\end{proof}

\begin{cor}\label{c34}
The following assertions hold:
\begin{itemize}
  \item [(i)] Theorem~\ref{t2} holds for ultrametric spaces, i.e., for semimetric spaces with the triangle function $\Phi(u,v)=\max\{u,v\}$.
  \item [(ii)] The following inequality holds:
  $$
  d(x_n,x^*)\leqslant \alpha^n d(x_0,x_1).
  $$
\end{itemize}
\end{cor}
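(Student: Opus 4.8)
The plan is to verify that the ultrametric triangle function $\Phi(u,v)=\max\{u,v\}$ fulfils every hypothesis of Theorem~\ref{t2}, mirroring the verification carried out for the additive function in Corollary~\ref{c33}. First I would note that $\Phi$ is continuous at $(0,0)$, which is immediate since $\max\{u,v\}$ is continuous on all of $\RR^+\times\RR^+$. Then I would check the homogeneity condition~(\ref{ee1}): for every $k\in\RR^+$ one has $\max\{ku,kv\}=k\max\{u,v\}$, so~(\ref{ee1}) holds.

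The central step is to evaluate the nested expression in condition~(\ref{ee2}). Since $0\leqslant\alpha<1$, the powers of $\alpha$ satisfy $\alpha^{p}\leqslant\alpha^{p-1}\leqslant\cdots\leqslant\alpha\leqslant 1$. Working from the innermost maximum outward, each application of $\max$ keeps the larger argument, that is, the one with the smaller exponent; consequently the whole tower collapses to $\max\{1,\alpha,\alpha^2,\ldots,\alpha^p\}=1$ for every $p\in\NN^+$. Hence~(\ref{ee2}) holds with the uniform constant $C(\alpha)=1$, and Theorem~\ref{t2} applies, which proves assertion~(i).

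For assertion~(ii) I would appeal to inequality~(\ref{ee5}) from the Remark following Lemma~\ref{lem}; its hypotheses are satisfied because an ultrametric is in particular a metric, so $d$ is continuous. Substituting the value $C(\alpha)=1$ obtained above into~(\ref{ee5}) gives at once $d(x_n,x^*)\leqslant\alpha^n d(x_0,x_1)$. I expect no serious obstacle here, as each item is a routine check; the only point needing slight care is the collapse of the nested maxima in~(\ref{ee2}), where one must track the monotone ordering of the powers of $\alpha$ to see that the tower reduces to $1$ independently of $p$.
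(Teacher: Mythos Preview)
Your proposal is correct and follows essentially the same approach as the paper: verify continuity of $\Phi$ at the origin, check the homogeneity~(\ref{ee1}), observe that the nested maxima in~(\ref{ee2}) collapse to $\max\{1,\alpha,\ldots,\alpha^p\}=1$ so that $C(\alpha)=1$, and then read off assertion~(ii) from~(\ref{ee5}). Your write-up is in fact slightly more careful than the paper's, since you explicitly justify the continuity of $d$ (needed for~(\ref{ee5})) by noting that an ultrametric is a metric.
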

\begin{proof}
\textbf{(i)} It is easy to see that $\Phi$ satisfies equality~(\ref{ee1}) and $\Phi$ is continuous at $(0,0)$. Consider expression~(\ref{ee2}) for the power triangle functions $\Phi$. Since $\alpha<1$ we have
$$
\max\{1,\alpha,\alpha^{2},\cdots,\alpha^{p-1},\alpha^{p}\}=1=C(\alpha).
$$
which establishes inequality~(\ref{ee2}).

Assertion \textbf{(ii)} follows directly from~(\ref{ee5}).
\end{proof}

Distance spaces with power triangle functions $\Phi(u,v)=(u^q+v^q)^{\frac{1}{q}}$, $q\in[-\infty, \infty]$ were considered in~\cite{Gr16}. In~\cite{Gr16} these functions have a little more general form. Note also that semimetric spaces with power triangle functions are metric spaces if $q\geqslant 1$.

\begin{cor}\label{c35}
The following assertions hold:
\begin{itemize}
  \item [(i)] Theorem~\ref{t2} holds for semimetric spaces with power triangle functions $\Phi(u,v)=(u^q+v^q)^{\frac{1}{q}}$ if $q>0$.
  \item [(ii)] The following inequality holds for $q\geqslant 1$:
  $$
  d(x_n,x^*)\leqslant \frac{\alpha^n}{(1-\alpha^q)^{\frac{1}{q}}}d(x_0,x_1).
  $$
\end{itemize}
\end{cor}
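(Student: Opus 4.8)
The plan is to follow exactly the verification scheme used in Corollaries~\ref{c33} and~\ref{c34}: I check that the power triangle function $\Phi(u,v)=(u^q+v^q)^{1/q}$ satisfies the three hypotheses of Theorem~\ref{t2}, namely homogeneity~(\ref{ee1}), continuity at the origin, and the boundedness condition~(\ref{ee2}). The homogeneity is immediate, since for $k,u,v\in\RR^+$ one has $\Phi(ku,kv)=(k^qu^q+k^qv^q)^{1/q}=k(u^q+v^q)^{1/q}=k\Phi(u,v)$, where $k^q$ is well defined because $q>0$. Continuity at $(0,0)$ is equally clear: as $(u,v)\to(0,0)$ we have $u^q+v^q\to 0$, hence $\Phi(u,v)\to 0=\Phi(0,0)$.

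The crux is verifying~(\ref{ee2}), and I would do this by evaluating the nested expression from the inside out. The key observation is that each application of $\Phi$ merely appends one more term to a sum sitting under a single $q$-th root, because raising to the power $q$ recovers the additive structure. Concretely, the innermost term is $\Phi(\alpha^{p-1},\alpha^{p})=(\alpha^{(p-1)q}+\alpha^{pq})^{1/q}$; substituting into the next level gives $(\alpha^{(p-2)q}+\alpha^{(p-1)q}+\alpha^{pq})^{1/q}$, and by a straightforward induction the whole nested expression collapses to
$$
\Phi(1,\Phi(\alpha,\ldots,\Phi(\alpha^{p-1},\alpha^{p})))=\left(\sum_{k=0}^{p}\alpha^{kq}\right)^{1/q}.
$$
Since $0\leqslant\alpha<1$ and $q>0$, the ratio $\alpha^q$ satisfies $\alpha^q<1$, so this finite geometric sum is bounded above by $\sum_{k=0}^{\infty}\alpha^{kq}=1/(1-\alpha^q)$ for every $p\in\NN^+$. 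This yields the bound with $C(\alpha)=(1-\alpha^q)^{-1/q}$ and establishes~(\ref{ee2}), so assertion~\textbf{(i)} follows directly from Theorem~\ref{t2}.

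For assertion~\textbf{(ii)} I would invoke the estimate~(\ref{ee5}) from the Remark, which gives $d(x_n,x^*)\leqslant\alpha^{n}C(\alpha)d(x_0,x_1)$ with the constant $C(\alpha)=(1-\alpha^q)^{-1/q}$ just computed, producing precisely the claimed inequality. The point where the restriction $q\geqslant 1$ becomes essential is exactly here: the Remark requires the semimetric $d$ to be continuous, and as noted in the text, for $q\geqslant 1$ the power triangle function turns $(X,d)$ into a genuine metric space, where the metric is automatically continuous. Thus the hypotheses of~(\ref{ee5}) are met only in this range, which is why~\textbf{(ii)} is stated for $q\geqslant 1$ whereas~\textbf{(i)} holds for all $q>0$.

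The only nontrivial step is the inductive collapse of the nested $\Phi$ into a single $q$-th root of a geometric sum; I expect this identity to be the main point requiring care, since once it is in hand the remaining work reduces to a routine geometric-series bound together with direct appeals to the already-proved Theorem~\ref{t2} and the Remark.
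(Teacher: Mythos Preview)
Your proposal is correct and follows essentially the same approach as the paper: verify homogeneity~(\ref{ee1}) and continuity at the origin, collapse the nested $\Phi$ in~(\ref{ee2}) to $(1+\alpha^q+\cdots+\alpha^{pq})^{1/q}$, bound by the infinite geometric series to obtain $C(\alpha)=(1-\alpha^q)^{-1/q}$, and then invoke~(\ref{ee5}) together with the remark that for $q\geqslant 1$ the space is a genuine metric space so the semimetric is continuous. Your write-up is in fact somewhat more explicit than the paper's on the inductive unwinding of the nested expression and on why the restriction $q\geqslant 1$ enters in part~(ii).
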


\begin{proof}
\textbf{(i)} It is easy to see that $\Phi$ satisfies equality~(\ref{ee1}) and $\Phi$ is continuous at $(0,0)$. Consider expression~(\ref{ee2}) for the power triangle functions $\Phi$:
$$
(1+\alpha^q+\alpha^{2q}+\cdots+\alpha^{(p-1)q}+\alpha^{pq})^{\frac{1}{q}}.
$$
It is clear that the sum
\begin{equation}\label{e49}
1+\alpha^q+\alpha^{2q}+\cdots+\alpha^{(p-1)q}+\alpha^{pq}
\end{equation}
consists of $p+1$ terms of geometric progression with the common ratio $\alpha^q$ and start value $1$. Since $\alpha<1$ we have the inequality $\alpha^q<1$. According to the formula for the sum of infinite geometric series sum~(\ref{e49}) is less than $1/(1-\alpha^q)$ for every finite $p\in \NN^+$.
Hence,
$$
(1+\alpha^q+\alpha^{2q}+\cdots+\alpha^{(p-1)q}+\alpha^{pq})^{\frac{1}{q}}
< (1/(1-\alpha^q))^{\frac{1}{q}}=C(\alpha),
$$
which establishes inequality~(\ref{ee2}).

Assertion \textbf{(ii)} follows directly from~(\ref{ee5}) and from the fact that semimetric spaces with power triangle functions are metric spaces if $q\geqslant 1$.
\end{proof}


\begin{cor}\label{c36}
 Theorem~\ref{t2} holds for $b$-metric spaces {with the coefficient $K$ } if $\alpha K<1$, where $\alpha$ is the coefficient from~(\ref{e3}).
\end{cor}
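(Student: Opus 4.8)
The plan is to verify that the triangle function $\Phi(u,v)=K(u+v)$ of a $b$-metric space (with coefficient $K\geqslant 1$, as noted after Definition~\ref{d21}) satisfies the three hypotheses of Theorem~\ref{t2}: the homogeneity~(\ref{ee1}), continuity at $(0,0)$, and the boundedness condition~(\ref{ee2}). The first two are immediate: one computes $\Phi(ku,kv)=K(ku+kv)=kK(u+v)=k\Phi(u,v)$, and $\Phi$ is continuous on all of ${\RR}^+\times{\RR}^+$, in particular at the origin. Everything therefore reduces to establishing~(\ref{ee2}), and this is exactly where the hypothesis $\alpha K<1$ must enter.

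To handle~(\ref{ee2}) I would compute the nested expression explicitly. Writing $a_i=\alpha^i$ and denoting by $b_k$ the value of $\Phi(a_k,\Phi(a_{k+1},\ldots,\Phi(a_{p-1},a_p)))$, the linearity of $\Phi$ in each slot yields the recurrence $b_k=K(\alpha^k+b_{k+1})$ with base value $b_p=\alpha^p$. Unrolling this recurrence, I expect each level of nesting to multiply the accumulated tail by an additional factor of $K$, producing
$$
b_0=\sum_{j=0}^{p-1}K^{j+1}\alpha^j+K^p\alpha^p
=K\sum_{j=0}^{p-1}(K\alpha)^j+(K\alpha)^p.
$$

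Once the expression is in this form, the role of the assumption becomes transparent: setting $\beta:=K\alpha<1$, the partial geometric sum $\sum_{j=0}^{p-1}\beta^j$ is bounded above by $1/(1-K\alpha)$ uniformly in $p$, while $(K\alpha)^p<1$. Hence the whole quantity stays below the finite constant $C(\alpha)=K/(1-K\alpha)+1$, independent of $p$, which is precisely the content of~(\ref{ee2}). With all three hypotheses confirmed, Theorem~\ref{t2} applies and $T$ has a unique fixed point.

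The main obstacle I anticipate is bookkeeping rather than conceptual: correctly tracking how the coefficient $K$ accumulates through the $p$ levels of nesting, since each application of $\Phi$ contributes a multiplicative $K$ to everything inside it. A careless unrolling would easily misplace these powers and obscure the decisive geometric ratio $K\alpha$; pinning down that ratio is what makes $\alpha K<1$ — rather than merely $\alpha<1$ — the natural and necessary condition for the series to converge.
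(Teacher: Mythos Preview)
Your proof is correct and follows essentially the same line as the paper's: verify that $\Phi(u,v)=K(u+v)$ is positively homogeneous and continuous at the origin, then expand the nested expression in~(\ref{ee2}) to obtain $K+K^{2}\alpha+\cdots+K^{p}\alpha^{p-1}+K^{p}\alpha^{p}$ and bound it by a geometric series with ratio $K\alpha<1$. The only cosmetic difference is that the paper absorbs the last term by replacing $K^{p}\alpha^{p}$ with $K^{p+1}\alpha^{p}$ to get the single constant $C(\alpha)=K/(1-K\alpha)$, whereas you bound $(K\alpha)^{p}<1$ separately to arrive at $K/(1-K\alpha)+1$; either constant suffices.
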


\begin{proof}
It is clear that $\Phi(u,v)=K(u+v)$ satisfies condition~(\ref{ee1}) and it is continuous at $(0,0)$.
Consider expression~(\ref{ee2}) for the function $\Phi$:

\begin{equation}\label{e519}
K+K^2\alpha+K^3\alpha^{2}+\cdots+K^p\alpha^{p-1}+K^{p}\alpha^{p}
\end{equation}
$$
\leqslant K+K^2\alpha+K^3\alpha^{2}+\cdots+K^p\alpha^{p-1}+K^{p+1}\alpha^{p}.
$$
It is clear that this sum consists of $p+1$ terms of geometric progression with the common ratio $\alpha K$ and the start value $K$. According to the formula for the sum of infinite geometric series sum~(\ref{e519}) is less than $K/(1-\alpha K)=C(\alpha)$ for every finite $p\in \NN^+$, which establishes inequality~(\ref{ee2}).
\end{proof}

Note that Corollary~\ref{c36} is already known, see Theorem 1 in~\cite{KK13}.

\section{Kannan's contractions in semimetric spaces}

In~\cite{Ka68} Kannan proved the following result which gives the fixed point for discontinuous mappings.
\begin{thm}
Let $T\colon X\to X$ be a mapping on a complete metric space $(X,d)$ such that
  \begin{equation}\label{kk}
   d(Tx,Ty)\leqslant \beta (d(x,Tx)+d(y,Ty)),
  \end{equation}
where $0\leqslant \beta<\frac{1}{2}$ and $x,y \in X$. Then $T$ has a unique fixed point.
\end{thm}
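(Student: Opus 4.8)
The plan is to run the Picard iteration and reduce the Kannan condition to the hypothesis of Lemma~\ref{lem}. First I would fix $x_0\in X$ and set $x_n=Tx_{n-1}$ for $n=1,2,\ldots$. Applying~(\ref{kk}) with $x=x_{n-1}$ and $y=x_n$ gives
$$
d(x_n,x_{n+1})=d(Tx_{n-1},Tx_n)\leqslant \beta\bigl(d(x_{n-1},x_n)+d(x_n,x_{n+1})\bigr),
$$
and solving for $d(x_n,x_{n+1})$ yields $d(x_n,x_{n+1})\leqslant \tfrac{\beta}{1-\beta}\,d(x_{n-1},x_n)$.

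Next I would set $\alpha=\tfrac{\beta}{1-\beta}$ and observe that $0\leqslant\beta<\tfrac12$ forces $\alpha\in[0,1)$. Since a metric space is a semimetric space with triangle function $\Phi(u,v)=u+v$, which satisfies~(\ref{ee1}) and~(\ref{ee2}) (as verified in Corollary~\ref{c33}), the displayed inequality is precisely condition~(\ref{e31n0}). Hence Lemma~\ref{lem} applies and $(x_n)$ is a Cauchy sequence; completeness then furnishes a limit $x^*\in X$.

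The main obstacle is proving $Tx^*=x^*$, because---unlike in the Banach setting of Theorem~\ref{t2}---the map $T$ need not be continuous, so I cannot pass to the limit inside $T$. Instead I would estimate, using the triangle inequality and then~(\ref{kk}) with $x=x_n$, $y=x^*$,
$$
d(x^*,Tx^*)\leqslant d(x^*,x_{n+1})+d(x_{n+1},Tx^*)\leqslant d(x^*,x_{n+1})+\beta\bigl(d(x_n,x_{n+1})+d(x^*,Tx^*)\bigr).
$$
Rearranging gives $(1-\beta)\,d(x^*,Tx^*)\leqslant d(x^*,x_{n+1})+\beta\,d(x_n,x_{n+1})$, and letting $n\to\infty$ the right-hand side tends to $0$ since $x_{n+1}\to x^*$ and $d(x_n,x_{n+1})\to 0$. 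Thus $d(x^*,Tx^*)=0$, so $x^*$ is a fixed point. For uniqueness, if $x$ and $y$ are both fixed points then~(\ref{kk}) yields $d(x,y)=d(Tx,Ty)\leqslant\beta\bigl(d(x,Tx)+d(y,Ty)\bigr)=0$, whence $x=y$.
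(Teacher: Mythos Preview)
Your proof is correct and essentially mirrors the paper's argument. The paper proves this result as the metric special case of Theorem~\ref{t7} (via Corollary~\ref{c334}): same Picard iteration, same reduction $d(x_n,x_{n+1})\leqslant\tfrac{\beta}{1-\beta}d(x_{n-1},x_n)$ feeding into Lemma~\ref{lem}, and the same triangle-inequality-plus-(\ref{kk}) estimate to show $Tx^*=x^*$; the only cosmetic difference is that you rearrange before passing to the limit, while the paper takes the limit through $\Phi$ first and then uses the homogeneity~(\ref{ee1}) together with $\Phi(0,\beta)<1$.
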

The mappings satisfying inequality~(\ref{kk}) are called \emph{Kannan type mappings}. 

\begin{thm}\label{t7}
Let $(X,d)$ be a complete semimetric space with the continuous triangle function $\Phi$, satisfying conditions~(\ref{ee1}) and~(\ref{ee2}). Let  $T\colon X\to X$ satisfy inequality~(\ref{kk}) with some $0\leqslant \beta<\frac{1}{2}$ and let additionally the following condition hold:
\begin{itemize}
\item [(i)]  $\Phi(0,\beta)<1$.
\end{itemize}
 Then $T$ has a unique fixed point.
\end{thm}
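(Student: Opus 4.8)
The plan is to mimic the classical Kannan argument but to route the Cauchy property through Lemma~\ref{lem}, replacing the usual triangle-inequality manipulations by the generalized inequality~(\ref{tr}) together with the homogeneity~(\ref{ee1}) and the continuity of $\Phi$. First I would fix an arbitrary $x_0\in X$ and form the Picard iteration $x_n=Tx_{n-1}$, $n=1,2,\ldots$. Applying~(\ref{kk}) to $x=x_{n-1}$, $y=x_n$ gives
$$
d(x_n,x_{n+1})=d(Tx_{n-1},Tx_n)\leqslant \beta\bigl(d(x_{n-1},x_n)+d(x_n,x_{n+1})\bigr),
$$
so that $(1-\beta)d(x_n,x_{n+1})\leqslant \beta\, d(x_{n-1},x_n)$ and hence
$$
d(x_n,x_{n+1})\leqslant \alpha\, d(x_{n-1},x_n),\qquad \alpha:=\frac{\beta}{1-\beta}.
$$
Since $0\leqslant\beta<\tfrac12$ one checks $\alpha\in[0,1)$, so~(\ref{e31n0}) holds and Lemma~\ref{lem} yields that $(x_n)$ is Cauchy; completeness then produces a limit $x^*$.

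The heart of the argument is to show $Tx^*=x^*$ \emph{without} assuming $T$ continuous. I would estimate, using~(\ref{tr}),
$$
d(x^*,Tx^*)\leqslant \Phi\bigl(d(x^*,x_{n+1}),\,d(x_{n+1},Tx^*)\bigr),
$$
and then bound the second slot by applying~(\ref{kk}) to $x=x_n$, $y=x^*$:
$$
d(x_{n+1},Tx^*)=d(Tx_n,Tx^*)\leqslant \beta\bigl(d(x_n,x_{n+1})+d(x^*,Tx^*)\bigr).
$$
Using that $\Phi$ is nondecreasing in each variable and then letting $n\to\infty$, where $d(x^*,x_{n+1})\to0$ and $d(x_n,x_{n+1})\to0$, continuity of $\Phi$ gives
$$
d(x^*,Tx^*)\leqslant \Phi\bigl(0,\beta\, d(x^*,Tx^*)\bigr).
$$
Writing $r=d(x^*,Tx^*)$ and assuming $r>0$, the homogeneity~(\ref{ee1}) with $k=r$ turns the right-hand side into $r\,\Phi(0,\beta)$, whence $1\leqslant\Phi(0,\beta)$, contradicting hypothesis~(i). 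Therefore $r=0$ and $x^*$ is a fixed point.

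Uniqueness is then immediate: if $x^*$ and $y^*$ are both fixed points, then $d(x^*,Tx^*)=d(y^*,Ty^*)=0$, so~(\ref{kk}) forces $d(x^*,y^*)\leqslant\beta\cdot0=0$ and thus $x^*=y^*$. I expect the fixed-point step to be the main obstacle, precisely because Kannan mappings may be discontinuous, so one cannot pass $T$ through the limit as in Theorem~\ref{t2}. The device that saves the argument is the self-referential bound on $d(x_{n+1},Tx^*)$ coming from~(\ref{kk}): after passing to the limit it isolates $d(x^*,Tx^*)$ on both sides, the homogeneity~(\ref{ee1}) lets me cancel the common factor $r$, and the extra hypothesis~(i) is exactly what rules out a nonzero value. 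Note that here I use continuity of $\Phi$ at the point $(0,\beta r)$, not merely at the origin, which is why the hypothesis on $\Phi$ is strengthened compared with Theorem~\ref{t2}.
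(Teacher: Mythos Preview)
Your proof is correct and follows essentially the same route as the paper's: derive $d(x_n,x_{n+1})\leqslant\alpha\,d(x_{n-1},x_n)$ with $\alpha=\beta/(1-\beta)$, invoke Lemma~\ref{lem} for the Cauchy property, then trap $d(x^*,Tx^*)$ via~(\ref{tr}),~(\ref{kk}), continuity of $\Phi$, and homogeneity~(\ref{ee1}) to conclude from~(i) that $d(x^*,Tx^*)=0$. Your closing remark explaining why full continuity of $\Phi$ (rather than continuity only at the origin) is genuinely used is a nice addition not made explicit in the paper.
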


\begin{proof}
Let $x_0\in X$. Define $x_n = T^n x_0$ for $n = 1, 2, \ldots$. It follows straightforwardly that
$$
d(x_n,x_{n+1}) = d(Tx_{n-1},Tx_n)
$$
$$
\leqslant \beta (d(x_{n-1},Tx_{n-1}) + d(x_n,Tx_n)) = \beta (d(x_{n-1},x_n) + d(x_n,x_{n+1})),
$$
and
\[
d(x_n,x_{n+1}) \leqslant \alpha d(x_{n-1},x_n),
\]
where $\alpha = \frac{\beta}{1-\beta}$, $0\leqslant \alpha <1$. By Lemma~\ref{lem} $(x_n)$ is a Cauchy sequence and  by completeness of $(X,d)$, this sequence has a limit $x^*\in X$.

Let us prove that $Tx^*=x^*$. 
By triangle inequality~(\ref{tr}), the monotonicity of $\Phi$ and~(\ref{kk}) we get
$$
d(x^*,Tx^*)\leqslant \Phi(d(x^*,T^nx_{0}),d(T^nx_{0},Tx^*))
$$
$$
\leqslant \Phi(d(x^*,T^nx_{0}),\beta (d(T^{n-1}x_{0},T^{n}x_{0})+d(x^*,Tx^*))).
$$
Letting $n\to \infty$, by the continuity of $\Phi$  we obtain
$$
d(x^*,Tx^*)\leqslant \Phi(0,\beta d(x^*,Tx^*)).
$$
{Using~(\ref{ee1}), we have
$$
d(x^*,Tx^*)\leqslant d(x^*,Tx^*)\Phi(0,\beta).
$$}
By condition (i) we get $d(x^*,Tx^*)=0.$

Suppose that there exist two distinct fixed points $x$ and $y$. Then $Tx=x$ and $Ty=y$, which contradicts to~(\ref{kk}).
\end{proof}

\begin{cor}\label{c334}
Theorem~\ref{t7} holds for semimetric spaces with the following triangle functions: $\Phi(u,v)=u+v$; $\Phi(u,v)=K(u+v)$, $1\leqslant K \leqslant 2$; $\Phi(u,v)=\max\{u,v\}$; $\Phi(u,v)=(u^q+v^q)^{\frac{1}{q}}$, $q>0$, and with the corresponding estimations~(\ref{ee5}) from above for $d(x_n,x^*)$.
\end{cor}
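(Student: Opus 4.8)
The plan is to verify, for each of the four triangle functions listed, that the hypotheses of Theorem~\ref{t7} are satisfied, so that the conclusion follows immediately; the substantive content of the corollary is checking the extra condition (i), namely $\Phi(0,\beta)<1$, since the remaining hypotheses (conditions~(\ref{ee1}) and~(\ref{ee2}), and continuity of $\Phi$) have already been established for these functions in Corollaries~\ref{c33}--\ref{c36}. I would organize the proof as four short cases.

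First I would dispose of the routine hypotheses. For $\Phi(u,v)=u+v$, $\Phi(u,v)=\max\{u,v\}$, and $\Phi(u,v)=(u^q+v^q)^{1/q}$ with $q>0$, the homogeneity~(\ref{ee1}), the summability condition~(\ref{ee2}), and continuity were all checked in Corollaries~\ref{c33}, \ref{c34}, and \ref{c35} respectively. For $\Phi(u,v)=K(u+v)$, condition~(\ref{ee1}) and continuity are immediate, while condition~(\ref{ee2}) was verified in Corollary~\ref{c36} under the assumption $\alpha K<1$; here $\alpha=\beta/(1-\beta)$, so I would note that the restriction $1\leqslant K\leqslant 2$ guarantees $\alpha K<1$ precisely when the forthcoming condition (i) holds, since $\Phi(0,\beta)=K\beta$.

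The core computation is condition (i) in each case. For $\Phi(u,v)=u+v$ we have $\Phi(0,\beta)=\beta<\tfrac12<1$. For $\Phi(u,v)=\max\{u,v\}$ we have $\Phi(0,\beta)=\max\{0,\beta\}=\beta<1$. For the power function $\Phi(u,v)=(u^q+v^q)^{1/q}$ we have $\Phi(0,\beta)=(0+\beta^q)^{1/q}=\beta<1$. For the $b$-metric function $\Phi(u,v)=K(u+v)$ we have $\Phi(0,\beta)=K\beta$, and since $0\leqslant\beta<\tfrac12$, the bound $K\leqslant 2$ yields $K\beta<2\cdot\tfrac12=1$; this is exactly where the stated range of $K$ is used, and it simultaneously secures $\alpha K = K\beta/(1-\beta)<1$ because $K\beta<1$ forces $\alpha K<1$ as $1-\beta>\tfrac12$. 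In all four cases condition (i) holds, so Theorem~\ref{t7} applies and $T$ has a unique fixed point; the quantitative estimate for $d(x_n,x^*)$ is then inherited from the bound~(\ref{ee5}) with the respective constant $C(\alpha)$ computed in the earlier corollaries.

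The main obstacle, such as it is, lies entirely in the $b$-metric case: one must reconcile the constraint $K\leqslant 2$ with both condition (i), $K\beta<1$, and the summability condition~(\ref{ee2}), which needs $\alpha K<1$. I expect the delicate bookkeeping to be showing that $K\leqslant 2$ together with $\beta<\tfrac12$ is sufficient for both—which it is, since $K\beta<1$ and $1-\beta>\tfrac12$ give $\alpha K<2K\beta<2$, though in fact the sharper chain $\alpha K=\tfrac{K\beta}{1-\beta}<\tfrac{1}{1-\beta}\cdot\tfrac{1}{1}$ must be handled carefully to land strictly below $1$; the other three cases are immediate and require no restriction on parameters.
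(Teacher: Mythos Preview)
For $\Phi(u,v)=u+v$, $\max\{u,v\}$, and $(u^q+v^q)^{1/q}$ your verification of condition~(i) is correct and matches the paper, which simply cites Corollaries~\ref{c33}--\ref{c35} and asserts that all the listed triangle functions satisfy condition~(i) of Theorem~\ref{t7}.

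The $b$-metric case $\Phi(u,v)=K(u+v)$ contains a genuine gap. Your assertion that ``$K\beta<1$ forces $\alpha K<1$ as $1-\beta>\tfrac12$'' is false: take $K=2$, $\beta=2/5$; then $K\beta=4/5<1$ yet $\alpha K=K\beta/(1-\beta)=(4/5)/(3/5)=4/3>1$. Your follow-up ``sharper chain'' does not repair this --- no inequality built from $K\beta<1$ and $1-\beta>\tfrac12$ alone will yield $\alpha K<1$ in general. Hence the summability bound of Corollary~\ref{c36}, which requires $\alpha K<1$, is unavailable for the full stated range $1\leqslant K\leqslant 2$, $0\leqslant\beta<\tfrac12$. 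More fundamentally, for $K>1$ the function $\Phi(u,v)=K(u+v)$ does not satisfy condition~(\ref{ee2}) as literally stated (``for every $0\leqslant\alpha<1$''), so Theorem~\ref{t7} is not directly applicable at all; one would have to revisit its proof with the specific iteration constant $\alpha=\beta/(1-\beta)$, and even then the argument via Lemma~\ref{lem} and Corollary~\ref{c36} delivers the conclusion only under $\beta<1/(K+1)$, not under $K\leqslant 2$. The paper's own proof, incidentally, cites only Corollaries~\ref{c33}--\ref{c35} and is silent on this case, so the difficulty you have flagged is present there as well.
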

\begin{proof}
The proof follows directly from Corollaries~\ref{c33},~\ref{c34},~\ref{c35} and from the fact that all above mentioned triangle functions satisfy condition (i) of Theorem~\ref{t7}.
\end{proof}

\section{Chatterjea's contractions in semimetric spaces}

In~\cite{Ch72} Chatterjea  proved the following result.
\begin{thm}\label{t40}
Let $T\colon X\to X$ be a mapping on a complete metric space $(X,d)$ such that
  \begin{equation}\label{Ch}
   d(Tx,Ty)\leqslant \beta (d(x,Ty)+d(y,Tx)),
  \end{equation}
where $0\leqslant \beta<\frac{1}{2}$ and $x,y \in X$. Then $T$ has a unique fixed point.
\end{thm}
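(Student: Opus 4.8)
The plan is to mirror the strategy used for the Banach and Kannan theorems: construct the Picard iteration, reduce the Chatterjea condition to the one-step contraction estimate~(\ref{e31n0}), invoke Lemma~\ref{lem} to obtain a Cauchy sequence, and then pass to the limit. First I would fix $x_0\in X$ and set $x_n=T^n x_0$. Applying~(\ref{Ch}) with $x=x_{n-1}$ and $y=x_n$ gives
$$
d(x_n,x_{n+1})=d(Tx_{n-1},Tx_n)\leqslant \beta\bigl(d(x_{n-1},Tx_n)+d(x_n,Tx_{n-1})\bigr)=\beta\bigl(d(x_{n-1},x_{n+1})+d(x_n,x_n)\bigr).
$$
Since $d(x_n,x_n)=0$, the right-hand side collapses to $\beta\, d(x_{n-1},x_{n+1})$, and the ordinary triangle inequality yields $d(x_{n-1},x_{n+1})\leqslant d(x_{n-1},x_n)+d(x_n,x_{n+1})$. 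Combining these and solving for $d(x_n,x_{n+1})$ produces
$$
d(x_n,x_{n+1})\leqslant \frac{\beta}{1-\beta}\, d(x_{n-1},x_n),
$$
so that~(\ref{e31n0}) holds with $\alpha=\beta/(1-\beta)\in[0,1)$.

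With the recursive estimate in hand, I would invoke Lemma~\ref{lem}. A metric space is a semimetric space with triangle function $\Phi(u,v)=u+v$, which satisfies~(\ref{ee1}) and~(\ref{ee2}) exactly as verified in Corollary~\ref{c33}; hence $(x_n)$ is Cauchy and, by completeness, converges to some $x^*\in X$. To show that $x^*$ is fixed, I would estimate
$$
d(x^*,Tx^*)\leqslant d(x^*,x_{n+1})+d(Tx_n,Tx^*)\leqslant d(x^*,x_{n+1})+\beta\bigl(d(x_n,Tx^*)+d(x^*,x_{n+1})\bigr),
$$
using~(\ref{Ch}) with $x=x_n$, $y=x^*$ and the identity $Tx_n=x_{n+1}$. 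Letting $n\to\infty$, the terms $d(x^*,x_{n+1})$ vanish while $d(x_n,Tx^*)\to d(x^*,Tx^*)$ by continuity of the metric, leaving $d(x^*,Tx^*)\leqslant \beta\, d(x^*,Tx^*)$; as $\beta<\frac{1}{2}<1$, this forces $d(x^*,Tx^*)=0$. Uniqueness is then immediate: if $Tx=x$ and $Ty=y$, then~(\ref{Ch}) gives $d(x,y)\leqslant \beta\bigl(d(x,y)+d(y,x)\bigr)=2\beta\, d(x,y)$, and $2\beta<1$ forces $x=y$.

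I expect the main obstacle to be the reduction step~--- exploiting the self-distance $d(x_n,x_n)=0$ together with the triangle inequality to transform the Chatterjea ``cross-term'' condition into the one-step contraction estimate~(\ref{e31n0}). In contrast to the Kannan case, the Chatterjea inequality bounds $d(x_n,x_{n+1})$ only through the \emph{diagonal} distance $d(x_{n-1},x_{n+1})$, so the triangle inequality is genuinely needed to close the recursion. This is precisely the place where, to extend the theorem from metric spaces to a general semimetric space with triangle function $\Phi$, one would have to replace the triangle inequality by the generalized inequality~(\ref{tr}) and impose an extra hypothesis on $\Phi$ analogous to condition~(i) of Theorem~\ref{t7}.
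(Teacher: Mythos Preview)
Your proof is correct and matches the paper's underlying argument. The paper does not prove Theorem~\ref{t40} directly; it derives it as Corollary~\ref{cx1} by specializing the general semimetric result (Theorem~\ref{t08}) to $\Phi(u,v)=u+v$ (via $q=1$ in Corollary~\ref{c46}), and when that proof is unwound for $\Phi=+$ the $\Psi^{-1}$-machinery collapses to exactly your computation: $\Psi(u)=u+1$, $\Psi^{-1}(1/\beta)=1/\beta-1$, hence $\alpha=\beta/(1-\beta)$, followed by Lemma~\ref{lem} and the same limit argument for $d(x^*,Tx^*)$. Your closing remark correctly anticipates the extra hypotheses (conditions~(i) and~(ii) of Theorem~\ref{t08}) that the paper imposes to push the argument through for general~$\Phi$.
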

The mappings satisfying inequality~(\ref{Ch}) are called \emph{Chatterjea type mappings}.

{To prove the following theorem we need the notion of an inverse function for a nondecreasing function. This is due to the fact that the aim of this theorem is also to cover the class of ultrametric spaces and the fact that the function $\Psi(u)=\max\{u,1\}$ is not strictly increasing. By~\cite[p.~34]{GRSY12} for every nondecreasing function $\Psi\colon [0,\infty]\to [0,\infty]$ the inverse function $\Psi^{-1}\colon [0,\infty]\to [0,\infty]$ can be well defined by setting}
$$
\Psi^{-1}(\tau)=\inf\limits_{\Psi(t)\geqslant \tau} t.
$$

{Here, $\inf$ is equal to $\infty$ if the set of $t \in[0,\infty]$ such that $\Psi(t)\geqslant \tau$ is empty. Note that the function  $\Psi^{-1}$ is nondecreasing too. It is evident immediately by the definition that}
\begin{equation}\label{r1}
  \Psi^{-1}(\Psi(t))\leqslant t \, \text{ for all } \, t\in [0,\infty].
\end{equation}

\begin{thm}\label{t08}
Let $(X,d)$ be a complete semimetric space with the {continuous} triangle function $\Phi$, satisfying conditions~(\ref{ee1}) and~(\ref{ee2}), and such that the semimetric $d$ is continuous. Let  $T\colon X\to X$ satisfy inequality~(\ref{Ch}) with some real number $\beta\geqslant  0$ such that the following conditions hold:
\begin{itemize}
  \item [(i)] $\Phi(0,\beta)<1$.
  \item [(ii)] $\Psi^{-1}(1/\beta)> 1$ if $\beta>0$, where $\Psi(u)=\Phi(u,1)$.
\end{itemize}
Then $T$ has a fixed point. If $0 \leqslant \beta< \frac{1}{2}$, then the fixed point is unique.
\end{thm}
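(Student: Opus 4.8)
The plan is to follow the Picard-iteration strategy of Theorem~\ref{t7}, the decisive new point being that the passage from the Chatterjea inequality~(\ref{Ch}) to a ratio estimate of the form~(\ref{e31n0}) is implicit and must be unwound with the generalized inverse $\Psi^{-1}$. First I would fix $x_0\in X$, set $x_n=T^nx_0$, and substitute $x=x_{n-1}$, $y=x_n$ into~(\ref{Ch}). Since $d(x_n,x_n)=0$, the right-hand side collapses to $\beta\,d(x_{n-1},x_{n+1})$, and a single application of the generalized triangle inequality~(\ref{tr}) produces
\[
d(x_n,x_{n+1})\leqslant\beta\,\Phi\bigl(d(x_{n-1},x_n),\,d(x_n,x_{n+1})\bigr).
\]
If some consecutive distance vanishes, the corresponding point is already fixed, so I may assume $a:=d(x_{n-1},x_n)>0$ and $b:=d(x_n,x_{n+1})>0$ (the case $\beta=0$ being trivial).

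The crux, and the step I expect to be the main obstacle, is to extract a \emph{uniform} contraction factor $\alpha<1$ from this self-referential inequality. Here I would invoke homogeneity~(\ref{ee1}) with $k=b$ to rewrite $\beta\Phi(a,b)=\beta b\,\Phi(a/b,1)=\beta b\,\Psi(a/b)$; dividing by $b$ then gives $\Psi(a/b)\geqslant 1/\beta$. Applying the nondecreasing generalized inverse, whose defining property is that $\Psi(s)\geqslant\tau$ forces $s\geqslant\Psi^{-1}(\tau)$, yields $a/b\geqslant\Psi^{-1}(1/\beta)$, so that $b\leqslant\alpha\,a$ with $\alpha:=1/\Psi^{-1}(1/\beta)$, and condition~(ii) is precisely what makes $\alpha<1$. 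This is exactly why the generalized inverse is needed in place of an ordinary one: for the ultrametric triangle function the associated $\Psi(u)=\max\{u,1\}$ is not strictly increasing, yet $\Psi^{-1}$ remains well defined and the estimate still goes through. Having established~(\ref{e31n0}) for this $\alpha$, Lemma~\ref{lem} gives that $(x_n)$ is Cauchy, and completeness furnishes a limit $x^*\in X$.

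It then remains to verify $Tx^*=x^*$ and uniqueness. I would begin from $d(x^*,Tx^*)\leqslant\Phi\bigl(d(x^*,x_{n+1}),d(x_{n+1},Tx^*)\bigr)$, bound the second argument through~(\ref{Ch}) as $d(Tx_n,Tx^*)\leqslant\beta\bigl(d(x_n,Tx^*)+d(x^*,x_{n+1})\bigr)$, and let $n\to\infty$. Here the continuity of $d$ is essential, in contrast to the Kannan argument, because the surviving term $d(x_n,Tx^*)$ must be shown to converge to $d(x^*,Tx^*)$; continuity of $\Phi$ then yields $d(x^*,Tx^*)\leqslant\Phi\bigl(0,\beta\,d(x^*,Tx^*)\bigr)=d(x^*,Tx^*)\,\Phi(0,\beta)$ after one further use of~(\ref{ee1}), and condition~(i) forces $d(x^*,Tx^*)=0$. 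Finally, when $0\leqslant\beta<\frac{1}{2}$, uniqueness is immediate: two fixed points $x,y$ satisfy $d(x,y)\leqslant\beta\bigl(d(x,y)+d(y,x)\bigr)=2\beta\,d(x,y)$, which is impossible unless $d(x,y)=0$.
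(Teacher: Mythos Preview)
Your proposal is correct and follows essentially the same route as the paper: the reduction of~(\ref{Ch}) to $d(x_n,x_{n+1})\leqslant\beta\,\Phi(d(x_{n-1},x_n),d(x_n,x_{n+1}))$, the extraction of the contraction factor $\alpha=\bigl(\Psi^{-1}(1/\beta)\bigr)^{-1}$ via homogeneity~(\ref{ee1}) and the generalized inverse, the appeal to Lemma~\ref{lem}, and the verification $Tx^*=x^*$ using the continuity of both $\Phi$ and $d$ together with condition~(i), all match the paper's argument step for step. Your handling of the cases $\beta=0$ and $d(x_i,x_{i+1})=0$, and of uniqueness for $\beta<\tfrac12$, is likewise the same in substance, only phrased a bit more explicitly.
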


\begin{proof}
Let $\beta=0$. Then~(\ref{Ch}) is equivalent to $d(Tx,Ty)=0$ for all $x,y \in X$. Let $x_0\in X$ and $x^*=Tx_0$, then $d(Tx_0,T(Tx_0))=0$ and $d(x^*,Tx^*)$=0. Hence, $x^*$ is a fixed point. Suppose that there exist another fixed point $x^{**}\neq x^*$, $x^{**}=Tx^{**}$. Then by the equality $d(Tx,Ty)=0$ we have $d(Tx^*,Tx^{**})=d(x^*,x^{**})=0$, which is a contradiction.

Let now $\beta>0$ and let $x_0 \in X$. Define $x_n = T^n x_0$ for $n = 1, 2, \ldots$. If $x_i=x_{i+1}$ for some $i$, then it is clear that $x_i$ is a fixed point. Suppose that $x_i\neq x_{i+1}$ for all $i$.

It follows straightforwardly that
\begin{align*}
    d(x_{n},x_{n+1}) &= d(Tx_{n-1},Tx_{n}) \leqslant   \beta (d(x_{n-1},Tx_{n}) + d(x_{n},Tx_{n-1})) \\
               &=  \beta (d(x_{n-1},x_{n+1}) + d(x_{n},x_{n}))= \beta d(x_{n-1},x_{n+1}).
\end{align*}
Hence, by triangle inequality~(\ref{tr}) and condition~(\ref{ee1}) we get
$$
d(x_{n},\,x_{{n+1}})\leqslant \beta \Phi(d(x_{{n-1}},\,x_{{n}}), d(x_{{n}},\,x_{{n+1}}))
$$
and
\begin{equation}\label{w1}
\frac{1}{\beta}\leqslant \Phi\left(\frac{d(x_{{n-1}},\,x_{{n}})}{d(x_{n},\,x_{{n+1}})}, 1\right)=\Psi\left(\frac{d(x_{{n-1}},\,x_{{n}})}{d(x_{n},\,x_{{n+1}})}\right),
\end{equation}
where $\Psi(u)=\Phi(u,1)$, $u\in [0,\infty)$. It is clear that $\Psi(u)$ is {nondecreasing }on $[0,\infty)$. Hence, $\Psi^{-1}(u)$ is also {nondecreasing} on $[0,\infty)$.
Hence, {it follows from~(\ref{w1}) and~(\ref{r1}) that}
$$
\Psi^{-1}\bigg(\frac{1}{\beta}\bigg)\leqslant \frac{d(x_{{n-1}},\,x_{{n}})}{d(x_{n},\,x_{{n+1}})}
$$
and
$$
d(x_{n},\,x_{{n+1}}) \leqslant \Big(\Psi^{-1}\Big({1}/{\beta}\Big)\Big)^{-1} {d(x_{{n-1}},\,x_{{n}})}.
$$
Consequently,
\[
d(x_{n},x_{n+1}) \leqslant \alpha d(x_{n-1},x_{n}),
\]
where $\alpha = \Big(\Psi^{-1}\Big({1}/{\beta}\Big)\Big)^{-1}$.
{Since by condition (ii) $\Psi^{-1}(1/\beta)> 1$ we get  $0\leqslant \alpha <1$.} By Lemma~\ref{lem} $(x_n)$ is a Cauchy sequence and  by completeness of $(X,d)$, this sequence has a limit $x^*\in X$.

Let us prove that $Tx^*=x^*$.  By triangle inequality~(\ref{tr}), the monotonicity of $\Phi$ and~(\ref{Ch}) we get
$$
d(x^*,Tx^*)\leqslant \Phi(d(x^*,T^nx_{0}),d(T^nx_{0},Tx^*))
$$
$$
\leqslant \Phi(d(x^*,T^nx_{0}),\beta (d(T^{n-1}x_{0},Tx^*)+d(x^*,T^nx_0))).
$$
Letting $n\to \infty$, by the continuity of $\Phi$ and $d$ we obtain
$$
d(x^*,Tx^*)\leqslant \Phi(0,\beta d(x^*,Tx^*)).
$$
{Using~(\ref{ee1}), we have
$$
d(x^*,Tx^*)\leqslant d(x^*,Tx^*)\Phi(0,\beta).
$$}
By condition (i) we get $d(x^*,Tx^*)=0$.

Suppose that there exist two distinct fixed points $x$ and $y$. Then $Tx=x$ and $Ty=y$, which contradicts to~(\ref{kk}) in the case $0\leqslant\beta<\frac{1}{2}$.
\end{proof}

\begin{cor}\label{c45}
Theorem~\ref{t08} holds in ultrametric spaces with the coefficient $0\leqslant \beta < 1$.
\end{cor}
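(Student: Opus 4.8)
The plan is to verify that, for the triangle function $\Phi(u,v)=\max\{u,v\}$ associated with an ultrametric space, all hypotheses of Theorem~\ref{t08} are satisfied precisely on the range $0\leqslant\beta<1$. First I would dispose of the structural assumptions. Ultrametric spaces are metric spaces, so the semimetric $d$ is continuous; the function $\Phi(u,v)=\max\{u,v\}$ is continuous on ${\RR}^+\times{\RR}^+$, and, as already noted in Corollary~\ref{c34}, it satisfies the homogeneity relation~(\ref{ee1}) together with the bound~(\ref{ee2}) with $C(\alpha)=1$. Thus the assumptions on $(X,d)$ and $\Phi$ hold automatically, and it remains only to check conditions (i) and (ii) of the theorem.

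For condition (i) I would simply compute $\Phi(0,\beta)=\max\{0,\beta\}=\beta$, so that $\Phi(0,\beta)<1$ is equivalent to $\beta<1$. For condition (ii), set $\Psi(u)=\Phi(u,1)=\max\{u,1\}$. This function equals $1$ on $[0,1]$ and equals $u$ on $[1,\infty)$, so it is not strictly increasing; this is exactly the situation motivating the generalized inverse. Using the definition $\Psi^{-1}(\tau)=\inf\{t:\Psi(t)\geqslant\tau\}$, for $\tau=1/\beta$ with $0<\beta<1$ we have $1/\beta>1$, so the requirement $\max\{t,1\}\geqslant 1/\beta$ forces $t\geqslant 1/\beta$; hence $\Psi^{-1}(1/\beta)=1/\beta>1$. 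This establishes condition (ii) for every $0<\beta<1$.

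With both conditions verified, Theorem~\ref{t08} applies and produces a fixed point for each $0\leqslant\beta<1$, together with uniqueness whenever $0\leqslant\beta<\tfrac{1}{2}$. I expect the only delicate point to be the evaluation of $\Psi^{-1}(1/\beta)$: because $\Psi$ is constant on $[0,1]$, one must argue directly from the infimum definition of the generalized inverse rather than inverting a strictly monotone map, and one must observe that the constant value $1$ of $\Psi$ on $[0,1]$ never reaches the threshold $1/\beta>1$, which is precisely what pins the infimum at $1/\beta$.
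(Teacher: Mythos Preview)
Your proposal is correct and follows essentially the same approach as the paper: both compute $\Psi(u)=\max\{u,1\}$, verify condition~(i) via $\Phi(0,\beta)=\beta<1$, and check condition~(ii) by evaluating the generalized inverse $\Psi^{-1}(1/\beta)=1/\beta>1$ for $0<\beta<1$. The only cosmetic difference is that the paper writes out the full piecewise formula for $\Psi^{-1}$ on $[0,\infty]$, whereas you compute $\Psi^{-1}$ directly at the single relevant value $\tau=1/\beta$; your added verification of the structural hypotheses on $d$ and $\Phi$ is a welcome bit of completeness that the paper leaves implicit.
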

\begin{proof}
According to the assumption $\Phi(u,v)=\max\{u,v\}$, $\Psi(u)=\max\{u,1\}$ and
$$
\Psi^{-1}(u)=
  \begin{cases}
    0, & u\in[0,1], \\
    u, & u\in(1,\infty).
  \end{cases}
$$
Clearly, condition (i) holds for all $0\leqslant\beta<1$ and condition (ii) holds for all $0<\beta<1$.
\end{proof}

\begin{cor}\label{c46}
Theorem~\ref{t08} holds for semimetric spaces with the following triangle functions $\Phi(u,v)=(u^q+v^q)^{\frac{1}{q}}$, $q\geqslant 1$, and with the coefficient $0\leqslant\beta<2^{-1/q}$ in~(\ref{Ch}).
\end{cor}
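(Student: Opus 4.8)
The plan is to verify, for the power triangle function $\Phi(u,v)=(u^q+v^q)^{1/q}$ with $q\geqslant 1$, that the two hypotheses (i) and (ii) of Theorem~\ref{t08} are satisfied precisely when $0\leqslant\beta<2^{-1/q}$, and then invoke that theorem. Recall that Corollary~\ref{c35} already establishes that such $\Phi$ satisfies conditions~(\ref{ee1}) and~(\ref{ee2}), is continuous at $(0,0)$ (indeed continuous everywhere), and that the space is a genuine metric space for $q\geqslant 1$, so the semimetric $d$ is continuous. Thus the only work remaining is the arithmetic of conditions (i) and (ii).

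First I would check condition (i). Here $\Phi(0,\beta)=(0^q+\beta^q)^{1/q}=\beta$, so $\Phi(0,\beta)<1$ is equivalent to $\beta<1$, which is automatic under the stronger bound $\beta<2^{-1/q}\leqslant 1$.

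Next I would compute $\Psi$ and its inverse. We have $\Psi(u)=\Phi(u,1)=(u^q+1)^{1/q}$, which is strictly increasing and continuous on $[0,\infty)$, hence has an ordinary inverse $\Psi^{-1}(\tau)=(\tau^q-1)^{1/q}$ for $\tau\geqslant 1$. Condition (ii) requires $\Psi^{-1}(1/\beta)>1$ when $\beta>0$; substituting gives $\big((1/\beta)^q-1\big)^{1/q}>1$, i.e. $(1/\beta)^q-1>1$, i.e. $(1/\beta)^q>2$, which rearranges to $\beta^q<1/2$, that is $\beta<2^{-1/q}$. This is exactly the stated range, so both (i) and (ii) hold if and only if $0\leqslant\beta<2^{-1/q}$.

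I do not expect any serious obstacle here; the statement is a direct specialization and the content is purely the inequality bookkeeping above. The one point deserving a word of care is the use of the well-defined inverse $\Psi^{-1}$ from the paragraph preceding Theorem~\ref{t08}: since $\Psi$ is strictly increasing for this $\Phi$, the general infimum definition coincides with the classical inverse $(\tau^q-1)^{1/q}$ on its range, so the computation in condition (ii) is unambiguous. Having verified all hypotheses, the conclusion of Theorem~\ref{t08} applies and yields a fixed point, unique when $0\leqslant\beta<\tfrac12$.
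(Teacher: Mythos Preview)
Your proposal is correct and follows essentially the same approach as the paper's proof: compute $\Psi(u)=(u^q+1)^{1/q}$ and its inverse $\Psi^{-1}(\tau)=(\tau^q-1)^{1/q}$, then observe that condition~(i) holds for all $0\leqslant\beta<1$ while condition~(ii) forces $0<\beta<2^{-1/q}$. Your version is more thorough in that you explicitly verify the continuity of the semimetric $d$ (via the fact that power triangle functions with $q\geqslant 1$ give genuine metric spaces), a hypothesis of Theorem~\ref{t08} that the paper's proof leaves implicit.
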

\begin{proof}
We have  $\Psi(u)=(u^q+1)^{\frac{1}{q}}$ and $\Psi^{-1}(u)=(u^q-1)^{\frac{1}{q}}$. Clearly, condition (i) holds for all $0\leqslant\beta<1$ but condition (ii) holds if $0<\beta<2^{-1/q}$.
\end{proof}

Note that the following proposition is already known, see Theorem 3 in~\cite{KK13}. But it does not follow from Theorem~\ref{t08} since the semimetric $d$ in a b-metric space $(X,d)$ is not obligatory continuous if $K>1$.
\begin{prop}\label{c43}
Theorem~\ref{t08} holds in b-metric spaces with $K\geqslant 1$ and with the coefficient $0\leqslant \beta<\frac{1}{2K}$ in~(\ref{Ch}).
\end{prop}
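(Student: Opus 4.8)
The plan is to rerun the proof of Theorem~\ref{t08} with the specific triangle function $\Phi(u,v)=K(u+v)$, keeping every step verbatim except the single passage to the limit that invokes continuity of the semimetric, which I would replace by a direct $b$-metric argument. I would first record that $\Phi(u,v)=K(u+v)$ satisfies the homogeneity~(\ref{ee1}) and is continuous at the origin, and that hypotheses (i), (ii) of Theorem~\ref{t08} collapse exactly to the claimed range. Here $\Phi(0,\beta)=K\beta$, so (i) is $K\beta<1$; and with $\Psi(u)=\Phi(u,1)=K(u+1)$ one finds $\Psi^{-1}(\tau)=\max\{\tau/K-1,0\}$, hence $\Psi^{-1}(1/\beta)=1/(K\beta)-1$, so (ii) is $1/(K\beta)-1>1$, i.e. $K\beta<1/2$. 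Both are guaranteed by $0\le\beta<1/(2K)$.

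Next I would reproduce the ratio estimate from Theorem~\ref{t08}. From~(\ref{Ch}) applied to $(x_{n-1},x_n)$, together with $d(x_n,x_n)=0$, one gets $d(x_n,x_{n+1})\le\beta\,d(x_{n-1},x_{n+1})$; the $b$-metric inequality $d(x_{n-1},x_{n+1})\le K\big(d(x_{n-1},x_n)+d(x_n,x_{n+1})\big)$ then gives
$$
d(x_n,x_{n+1})\le \alpha\,d(x_{n-1},x_n),\qquad \alpha=\frac{K\beta}{1-K\beta}<1 .
$$
No continuity of $d$ is used here. The resulting geometric decay $d(x_n,x_{n+1})\le\alpha^n d(x_0,x_1)$ makes $(x_n)$ a Cauchy sequence in the $b$-metric (this is Lemma~\ref{lem} with~(\ref{ee2}) verified as in Corollary~\ref{c36} when $K\alpha<1$, and in general follows from the familiar fact that geometric decay of consecutive distances forces the Cauchy property in any $b$-metric space), so completeness supplies a limit $x^*$ with $d(x_n,x^*)\to0$.

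The crux---and the only place where Theorem~\ref{t08} cannot be quoted, since its step $d(x^*,Tx^*)\le\Phi(0,\beta d(x^*,Tx^*))$ needs $d$ continuous---is to show $Tx^*=x^*$ without any continuity of $d$. I would set $b_n=d(x_n,Tx^*)$ and apply~(\ref{Ch}) to the pair $(x_{n-1},x^*)$, using $x_n=Tx_{n-1}$, to obtain
$$
b_n=d(Tx_{n-1},Tx^*)\le\beta\big(d(x_{n-1},Tx^*)+d(x^*,Tx_{n-1})\big)=\beta\,b_{n-1}+\beta\,d(x^*,x_n).
$$
Since $0\le\beta<1$, the sequence $(b_n)$ is bounded, and passing to $\limsup$ with $d(x^*,x_n)\to0$ yields $\limsup b_n\le\beta\limsup b_n$, whence $b_n\to0$. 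Thus $x_n\to Tx^*$ as well, and $d(Tx^*,x^*)\le K\big(d(Tx^*,x_n)+d(x_n,x^*)\big)\to0$ forces $Tx^*=x^*$; this uses only the definition of convergence and uniqueness of limits in $b$-metric spaces, never continuity of $d$.

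Finally, uniqueness follows because $2\beta<1/K\le1$: distinct fixed points $x,y$ would give $d(x,y)=d(Tx,Ty)\le\beta\big(d(x,Ty)+d(y,Tx)\big)=2\beta\,d(x,y)$, forcing $d(x,y)=0$. I expect the genuine obstacle to be precisely the fixed-point verification, and the recursion for $b_n$ is the device that removes the reliance on continuity of $d$ flagged in the remark preceding the statement.
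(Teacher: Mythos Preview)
The paper does not give a proof of this proposition at all: it merely records that the result is already known (Theorem~3 in~\cite{KK13}) and explicitly remarks that it \emph{does not} follow from Theorem~\ref{t08} because the semimetric in a $b$-metric space need not be continuous when $K>1$. Your write-up, by contrast, supplies an actual argument, and it is essentially correct. You correctly isolate the single spot in the proof of Theorem~\ref{t08} where continuity of $d$ is used---the passage to the limit in the estimate for $d(x^*,Tx^*)$---and replace it with the recursion $b_n\le\beta b_{n-1}+\beta d(x^*,x_n)$, whose $\limsup$ forces $b_n\to0$ without any continuity hypothesis on $d$. The uniqueness argument and the reduction of conditions (i)--(ii) to the range $0\le\beta<1/(2K)$ are also fine.

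One point deserves a sharper justification. With $\alpha=K\beta/(1-K\beta)$ you only know $\alpha<1$, not $K\alpha<1$; indeed $K\alpha<1$ is equivalent to $\beta<1/(K(K+1))$, strictly stronger than $\beta<1/(2K)$ when $K>1$. Hence Corollary~\ref{c36} (which needs $K\alpha<1$ to bound the nested $\Phi$-expression~(\ref{ee2})) does not cover the whole claimed range, and you fall back on the ``familiar fact'' that geometric decay $d(x_n,x_{n+1})\le c\alpha^n$ implies the Cauchy property in any $b$-metric space. That fact is true and standard in the $b$-metric literature, but it is not a consequence of anything proved in this paper, so in a self-contained version you should either prove it (e.g.\ by choosing $m$ with $K\alpha^m<1$ and first showing the subsequence $(x_{mn})$ is Cauchy) or cite it explicitly.
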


\begin{cor}\label{cx1}
Theorem~\ref{t40} holds.
\end{cor}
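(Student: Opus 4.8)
The plan is to recognize Theorem~\ref{t40} as nothing more than the metric-space instance of the already-proved Theorem~\ref{t08}, obtained by specializing the triangle function to $\Phi(u,v)=u+v$. A metric space is in particular a semimetric space carrying this $\Phi$, since the metric axiom (iii) is exactly the generalized triangle inequality~(\ref{tr}) for $\Phi(u,v)=u+v$, and in a metric space the distance $d$ is continuous. So I would simply check that, for this choice of $\Phi$ and for $0\leqslant\beta<\frac{1}{2}$, every hypothesis of Theorem~\ref{t08} is satisfied, and then invoke that theorem verbatim.

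First I would dispose of the structural hypotheses. Completeness of the metric space $(X,d)$ is assumed. The function $\Phi(u,v)=u+v$ is plainly continuous. Equality~(\ref{ee1}) holds because $\Phi(ku,kv)=ku+kv=k(u+v)$, and condition~(\ref{ee2}) was already verified for exactly this $\Phi$ in the proof of Corollary~\ref{c33}(i), where the geometric-series bound yields $C(\alpha)=1/(1-\alpha)$. Finally, the contractive hypothesis~(\ref{Ch}) is precisely the assumption of Theorem~\ref{t40}.

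It remains to verify the two numerical conditions (i) and (ii). Condition (i) reads $\Phi(0,\beta)=\beta<1$, which is immediate from $\beta<\frac{1}{2}$. For condition (ii) I would compute $\Psi(u)=\Phi(u,1)=u+1$; using the definition $\Psi^{-1}(\tau)=\inf_{\Psi(t)\geqslant\tau}t$ this gives $\Psi^{-1}(\tau)=\max\{\tau-1,0\}$. Hence, for $\beta>0$, the requirement $\Psi^{-1}(1/\beta)>1$ becomes $1/\beta-1>1$, that is $\beta<\frac{1}{2}$, which is again guaranteed by hypothesis. With all hypotheses met, Theorem~\ref{t08} yields a fixed point, and since $0\leqslant\beta<\frac{1}{2}$ the same theorem asserts its uniqueness.

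The verification is routine, so there is no serious obstacle; the only point demanding a little care is the generalized inverse in condition (ii). Because $\Psi(u)=u+1$ is strictly increasing, its generalized inverse coincides with the ordinary inverse on $[1,\infty)$, and one must notice the pleasant coincidence that condition (ii) collapses to exactly the bound $\beta<\frac{1}{2}$ appearing in Chatterjea's original statement, so the hypotheses of Theorem~\ref{t08} neither over- nor under-shoot those of Theorem~\ref{t40}.
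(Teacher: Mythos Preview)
Your proposal is correct and follows essentially the same approach as the paper: the paper's one-line proof simply invokes Corollary~\ref{c46} with $q=1$ (or Proposition~\ref{c43} with $K=1$), which amounts to exactly the verification you carry out by hand for $\Phi(u,v)=u+v$. The only difference is packaging: you redo the check of conditions~(\ref{ee1}), (\ref{ee2}), (i), and (ii) directly, whereas the paper observes that these checks have already been recorded in the cited corollaries.
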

\begin{proof}
It suffices to set $K=1$ in Proposition~\ref{c43} or $q=1$ in Corollary~\ref{c46}.
\end{proof}

\section{\'Ciri\'c-Reich-Rus's contractions  in semimetric spaces}
In 1971, independently, \'Ciri\'c \cite{CB71}, Reich ~\cite{Re71} and Rus \cite{Ru71} extended the Kannan fixed point theorem to cover a broader class of mappings.
\begin{thm}
Let $T\colon X\to X$ be a mapping on a complete metric space $(X,d)$ with
\begin{equation}\label{CRR}
  d(Tx,Ty) \leqslant \alpha d(x,y)+\beta d(x,Tx)+\gamma d(y,Ty),
\end{equation}
$\alpha\geqslant 0, \beta\geqslant 0, \gamma\geqslant 0$ and $\alpha+\beta+\gamma <1$. Then $T$ has a unique fixed point.
\end{thm}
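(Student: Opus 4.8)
The plan is to follow exactly the template already established for the Banach, Kannan, and Chatterjea theorems: generate the Picard iterates, show that they obey the contractive recurrence~(\ref{e31n0}) demanded by the key Lemma~\ref{lem}, invoke that lemma to get a Cauchy sequence, and then pass to the limit to locate and identify the fixed point. Since $(X,d)$ is a metric space, the relevant triangle function is $\Phi(u,v)=u+v$, which by the proof of Corollary~\ref{c33} satisfies both~(\ref{ee1}) and~(\ref{ee2}); this is precisely what Lemma~\ref{lem} requires. Moreover, in a metric space the distance is automatically continuous, so the limiting arguments that complicate the genuinely semimetric generalizations present no difficulty here.

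First I would fix $x_0\in X$ and set $x_n=T^nx_0$. Applying~(\ref{CRR}) to the pair $(x_{n-1},x_n)$ and using $Tx_{n-1}=x_n$, $Tx_n=x_{n+1}$ gives
$$
d(x_n,x_{n+1})\leqslant \alpha\, d(x_{n-1},x_n)+\beta\, d(x_{n-1},x_n)+\gamma\, d(x_n,x_{n+1}).
$$
Because $\gamma\leqslant\alpha+\beta+\gamma<1$ we have $1-\gamma>0$, so collecting the $d(x_n,x_{n+1})$ terms yields
$$
d(x_n,x_{n+1})\leqslant \frac{\alpha+\beta}{1-\gamma}\, d(x_{n-1},x_n).
$$
Writing $\lambda=\frac{\alpha+\beta}{1-\gamma}$, the hypothesis $\alpha+\beta+\gamma<1$ (equivalently $\alpha+\beta<1-\gamma$) gives $0\leqslant\lambda<1$, which is exactly condition~(\ref{e31n0}). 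Lemma~\ref{lem} then shows $(x_n)$ is Cauchy, and completeness of $(X,d)$ supplies a limit $x^*\in X$.

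Next I would verify that $x^*$ is fixed. Using the triangle inequality together with~(\ref{CRR}) applied to $(x_n,x^*)$,
$$
d(x^*,Tx^*)\leqslant d(x^*,x_{n+1})+\alpha\, d(x_n,x^*)+\beta\, d(x_n,x_{n+1})+\gamma\, d(x^*,Tx^*).
$$
Letting $n\to\infty$ and using continuity of the metric (every term on the right except the last tends to $0$) collapses this to $d(x^*,Tx^*)\leqslant\gamma\, d(x^*,Tx^*)$, and $\gamma<1$ forces $d(x^*,Tx^*)=0$. Uniqueness is immediate: if $x,y$ are both fixed, then $d(x,y)=d(Tx,Ty)\leqslant\alpha\, d(x,y)$ by~(\ref{CRR}), and $\alpha\leqslant\alpha+\beta+\gamma<1$ forces $d(x,y)=0$. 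The only point needing any care is checking that $\lambda=\frac{\alpha+\beta}{1-\gamma}$ genuinely lands in $[0,1)$, which rests solely on $\alpha+\beta+\gamma<1$ and $\gamma<1$; I do not expect a real obstacle, since the metric setting removes the delicate continuity hypotheses that the semimetric versions must impose separately.
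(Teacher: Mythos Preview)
Your proposal is correct and follows essentially the same approach as the paper's proof of the semimetric generalization (Theorem~\ref{t8}), specialized to $\Phi(u,v)=u+v$: Picard iteration, the recurrence $d(x_n,x_{n+1})\leqslant\frac{\alpha+\beta}{1-\gamma}\,d(x_{n-1},x_n)$, Lemma~\ref{lem} for Cauchy, and then the triangle inequality plus~(\ref{CRR}) to show $d(x^*,Tx^*)\leqslant\gamma\,d(x^*,Tx^*)$. Your uniqueness argument is slightly more explicit than the paper's (which simply says distinct fixed points would contradict~(\ref{CRR})), but the substance is identical.
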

In what follows, we will refer to the mapping (\ref{CRR}) as the \'Ciri\'c-Reich-Rus mapping. This theorem integrates principles from both the Banach contraction principle (by choosing $\beta=\gamma=0$) and the Kannan fixed point theorem with $\alpha=0$ and $\beta=\gamma$.

\begin{thm}\label{t8}
Let $(X,d)$ be a complete semimetric space with the continuous triangle function $\Phi$, satisfying conditions~(\ref{ee1}) and~(\ref{ee2}). Let  $T\colon X\to X$ be a \'Ciri\'c-Reich-Rus mapping with the coefficients $\alpha\geqslant 0, \beta\geqslant 0, \gamma\geqslant 0$, $\alpha+\beta+\gamma <1$, and let additionally the following  condition hold:
\begin{itemize}
   \item [(i)]  $\Phi(0,\gamma)<1$.
\end{itemize}
 Then $T$ has a unique fixed point.
\end{thm}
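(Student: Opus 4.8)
The plan is to follow verbatim the architecture already used for Theorems~\ref{t2}, \ref{t7} and \ref{t08}: run the Picard iteration $x_n=T^nx_0$, massage the \'Ciri\'c-Reich-Rus inequality~(\ref{CRR}) into the one-step decay hypothesis~(\ref{e31n0}) of Lemma~\ref{lem}, obtain a Cauchy sequence and its limit $x^*$, then exclude $d(x^*,Tx^*)>0$ via condition~(i) together with homogeneity~(\ref{ee1}), and close with a short uniqueness argument.

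First I would fix $x_0\in X$, set $x_n=T^nx_0$, and apply~(\ref{CRR}) with $x=x_{n-1}$, $y=x_n$. Using $Tx_{n-1}=x_n$ and $Tx_n=x_{n+1}$, the terms $d(x_{n-1},Tx_{n-1})$ and $d(x_n,Tx_n)$ become $d(x_{n-1},x_n)$ and $d(x_n,x_{n+1})$, giving
$$d(x_n,x_{n+1})\leqslant(\alpha+\beta)\,d(x_{n-1},x_n)+\gamma\,d(x_n,x_{n+1}).$$
Because $\alpha+\beta+\gamma<1$ forces $\gamma<1$, I may move the last term to the left and divide by $1-\gamma>0$ to get $d(x_n,x_{n+1})\leqslant\lambda\,d(x_{n-1},x_n)$ with $\lambda=(\alpha+\beta)/(1-\gamma)$; the hypothesis $\alpha+\beta+\gamma<1$ is exactly what makes $\lambda<1$. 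Thus~(\ref{e31n0}) holds and Lemma~\ref{lem} produces a Cauchy sequence, which converges to some $x^*\in X$ by completeness.

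Next I would verify $Tx^*=x^*$. Since $T^nx_0=x_n$, the generalized triangle inequality~(\ref{tr}) and monotonicity of $\Phi$ give
$$d(x^*,Tx^*)\leqslant\Phi\bigl(d(x^*,x_n),\,d(x_n,Tx^*)\bigr).$$
Applying~(\ref{CRR}) to the second slot with $x=x_{n-1}$, $y=x^*$ bounds $d(x_n,Tx^*)=d(Tx_{n-1},Tx^*)$ above by $\alpha\,d(x_{n-1},x^*)+\beta\,d(x_{n-1},x_n)+\gamma\,d(x^*,Tx^*)$. As $n\to\infty$ the distances $d(x^*,x_n)$ and $d(x_{n-1},x^*)$ vanish by convergence, while $d(x_{n-1},x_n)\to0$ by~(\ref{e31n}); hence continuity of $\Phi$ lets me pass to the limit and obtain $d(x^*,Tx^*)\leqslant\Phi(0,\gamma\,d(x^*,Tx^*))$. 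Homogeneity~(\ref{ee1}) rewrites the right-hand side as $d(x^*,Tx^*)\,\Phi(0,\gamma)$, so condition~(i) forces $d(x^*,Tx^*)=0$.

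For uniqueness I would put two fixed points $x,y$ into~(\ref{CRR}); the self-distance terms $d(x,Tx)$ and $d(y,Ty)$ vanish, leaving $d(x,y)\leqslant\alpha\,d(x,y)$, which is impossible unless $d(x,y)=0$ since $\alpha\leqslant\alpha+\beta+\gamma<1$. I do not anticipate a genuine obstacle, as every step mirrors the earlier theorems. The one point deserving care is the limit passage: unlike Theorem~\ref{t08}, here I should \emph{not} need to assume $d$ continuous, because the term carrying $d(x^*,Tx^*)$ enters as the self-distance $d(y,Ty)$ with $y=x^*$ and therefore appears as a constant, while every other argument of $\Phi$ tends to $0$ straight from the definition of convergence. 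This is exactly why condition~(i) is stated with $\gamma$ rather than with a cross coefficient.
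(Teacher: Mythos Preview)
Your argument is correct and follows essentially the same approach as the paper: the same one-step decay with ratio $(\alpha+\beta)/(1-\gamma)$ feeds into Lemma~\ref{lem}, the same $\Phi$-triangle estimate combined with~(\ref{CRR}) and continuity of $\Phi$ yields $d(x^*,Tx^*)\leqslant d(x^*,Tx^*)\Phi(0,\gamma)$, and uniqueness is dispatched exactly as in the earlier theorems. Your closing remark on why continuity of $d$ is unnecessary here (in contrast with Theorem~\ref{t08}) is a nice clarification that the paper leaves implicit.
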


\begin{proof}
Let $x_0\in X$. Define $x_n = T^n x_0$ for $n = 1, 2, \ldots$. Then, it follows straightforwardly that
\begin{align*}
    d(x_{n},x_{n+{1}})=d(Tx_{n-1},Tx_{n}) &\leqslant \alpha d(x_{n-1}, x_{n}) + \beta d(x_{n-1},Tx_{n-1}) + \gamma d(x_{n},Tx_{n}) \\
               &= \alpha d(x_{n-1}, x_{n}) + \beta d(x_{n-1},x_{n}) + \gamma d(x_{n},x_{n+{1}}).
\end{align*}
Hence,
$$
d(x_{n},x_{n+{1}}) \leqslant \delta d(x_{n-1},x_{n}),
$$
where $\delta = \frac{\alpha+\beta}{1-\gamma}$, $0\leqslant \delta <1$. By Lemma~\ref{lem} $(x_n)$ is a Cauchy sequence and  by completeness of $(X,d)$, this sequence has a limit $x^*\in X$.

Let us prove that $Tx^*=x^*$. 
By triangle inequality~(\ref{tr}), the monotonicity of $\Phi$ and~(\ref{CRR}) we get
$$
d(x^*,Tx^*)\leqslant \Phi(d(x^*,T^nx_{0}),d(T^nx_{0},Tx^*))
$$
$$
\leqslant \Phi(d(x^*,T^nx_{0}),\alpha d(T^{n-1}x_{0}, x^*)+\beta d(T^{n-1}x_{0},T^{n}x_{0})+\gamma d(x^*,Tx^*)).
$$
Letting $n\to \infty$, by the continuity of $\Phi$ we obtain
$$
d(x^*,Tx^*)\leqslant \Phi(0,\gamma d(x^*,Tx^*)).
$$
{Using~(\ref{ee1}), we have
$$
d(x^*,Tx^*)\leqslant d(x^*,Tx^*)\Phi(0,\gamma).
$$}
By condition (i) we get $d(x^*,Tx^*)=0$.

Suppose that there exist two distinct fixed points $x$ and $y$. Then $Tx=x$ and $Ty=y$, which contradicts to~(\ref{CRR}).
\end{proof}

\begin{cor}\label{c35x5}
Theorem~\ref{t8} holds for semimetric spaces with the following triangle functions: $\Phi(u,v)=u+v$; $\Phi(u,v)=K(u+v)$, $1\leqslant K<1/\gamma$; $\Phi(u,v)=\max\{u,v\}$; $\Phi(u,v)=(u^q+v^q)^{\frac{1}{q}}$, $q>0$, with the corresponding estimations~(\ref{ee5}) from above for $d(x_n,x^*)$.
\end{cor}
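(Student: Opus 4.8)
The plan is to verify, for each of the four listed triangle functions, the full set of hypotheses of Theorem~\ref{t8}: continuity of $\Phi$, the homogeneity relation~(\ref{ee1}), the summability bound~(\ref{ee2}), and the extra condition (i) that $\Phi(0,\gamma)<1$. The first three of these have already been established for precisely these functions in the Banach-principle corollaries of Section~2, so the work reduces to quoting those computations and then checking (i).

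First I would recall from Corollaries~\ref{c33}, \ref{c34} and~\ref{c35} that each of $\Phi(u,v)=u+v$, $\Phi(u,v)=\max\{u,v\}$ and $\Phi(u,v)=(u^q+v^q)^{1/q}$ with $q>0$ is continuous, satisfies~(\ref{ee1}), and satisfies~(\ref{ee2}) for every $\alpha\in[0,1)$ with the constant $C(\alpha)$ exhibited there; the $b$-metric function $\Phi(u,v)=K(u+v)$ is treated in Corollary~\ref{c36} and will need separate attention for~(\ref{ee2}), as discussed below. Given these facts, one applies Lemma~\ref{lem} to the Picard iterates $x_n=T^n x_0$ with the contraction ratio $\delta=(\alpha+\beta)/(1-\gamma)<1$ constructed in the proof of Theorem~\ref{t8}, and the estimate~(\ref{ee5}) then delivers the stated upper bounds on $d(x_n,x^*)$ with the respective constants.

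The only genuinely new verification is condition (i), and I would carry it out case by case. For $\Phi(u,v)=u+v$ and for $\Phi(u,v)=\max\{u,v\}$ one has $\Phi(0,\gamma)=\gamma$, while for $\Phi(u,v)=(u^q+v^q)^{1/q}$ one has $\Phi(0,\gamma)=(\gamma^q)^{1/q}=\gamma$; in each of these cases $\gamma\leqslant\alpha+\beta+\gamma<1$ yields (i) immediately. For the $b$-metric function $\Phi(u,v)=K(u+v)$ one instead gets $\Phi(0,\gamma)=K\gamma$, so (i) is equivalent to the hypothesis $K<1/\gamma$ (and holds trivially when $\gamma=0$).

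The main obstacle lies in the $b$-metric case, and it sits in~(\ref{ee2}) rather than in (i). Unlike the other three functions, for which~(\ref{ee2}) holds for every $\alpha\in[0,1)$, the function $\Phi(u,v)=K(u+v)$ with $K>1$ produces in~(\ref{ee2}) a geometric sum with ratio $\alpha K$, which is summable only when $\alpha K<1$ (compare the proof of Corollary~\ref{c36}). Therefore, in order to legitimately invoke Lemma~\ref{lem} here, the effective ratio must satisfy $\delta K<1$, that is $K(\alpha+\beta)<1-\gamma$. This is not implied by $K<1/\gamma$ together with $\alpha+\beta+\gamma<1$, so for a fully rigorous argument I would record $\delta K<1$ as an additional standing requirement on the $b$-metric coefficient, alongside $K<1/\gamma$. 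For the remaining three triangle functions no such restriction is needed, and the corollary follows at once from Theorem~\ref{t8} and~(\ref{ee5}).
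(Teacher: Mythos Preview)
The paper gives no proof whatsoever for this corollary; it is simply asserted. Your approach --- quote Corollaries~\ref{c33}--\ref{c36} for continuity, (\ref{ee1}) and (\ref{ee2}), then check condition~(i) of Theorem~\ref{t8} case by case --- is exactly the pattern the paper uses in the short proof of the parallel Corollary~\ref{c334}, and your execution is correct.

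Your treatment of the $b$-metric case is in fact more careful than the paper's. You correctly observe that the stated restriction $1\leqslant K<1/\gamma$ secures only condition~(i), namely $\Phi(0,\gamma)=K\gamma<1$, but does \emph{not} by itself guarantee~(\ref{ee2}) for the effective ratio $\delta=(\alpha+\beta)/(1-\gamma)$: the proof of Corollary~\ref{c36} shows that~(\ref{ee2}) for $\Phi(u,v)=K(u+v)$ requires the ratio times $K$ to be less than $1$, hence $\delta K<1$. Your counter-check is sound (e.g.\ $\alpha=\beta=0.4$, $\gamma=0.1$, $K=5$ satisfies $K<1/\gamma$ and $\alpha+\beta+\gamma<1$ but gives $\delta K>1$). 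The paper does not address this point at all, so your added hypothesis $\delta K<1$ is a genuine tightening of what the corollary, as stated, actually supports. The same gap is latent in Corollary~\ref{c334}'s $b$-metric clause, incidentally.
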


\section{Mappings contracting perimeters of triangles in semimetric spaces}

Let $X$ be a metric space. In~\cite{P23}, a new type of mappings $T\colon X\to X$ was considered, characterized as mappings contracting perimeters of triangles (see Definition~\ref{d1}). It was demonstrated that such mappings are continuous. Furthermore, a fixed-point theorem for such mappings was proven, with the classical Banach fixed-point theorem emerging as a simple corollary. An example of a mapping contracting perimeters of triangles, which is not a contraction mapping, was constructed for a space $X$ with $\operatorname{card}(X)=\aleph_0$. In this section, we establish a generalization of the aforementioned theorem.

The following definition was introduced in~\cite{P23} for the case of ordinary metric spaces. In this work, we extend it for the case of general semimetric spaces.

\begin{defn}\label{d1}
Let $(X,d)$ be a semimetric space with $|X|\geqslant 3$. We shall say that $T\colon X\to X$ is a \emph{mapping contracting perimeters of triangles} on $X$ if there exists $\alpha\in [0,1)$ such that the inequality
  \begin{equation}\label{e1}
   d(Tx,Ty)+d(Ty,Tz)+d(Tx,Tz) \leqslant \alpha (d(x,y)+d(y,z)+d(x,z))
  \end{equation}
  holds for all three pairwise distinct points $x,y,z \in X$.
\end{defn}

\begin{rem}
Note that the requirement for $x,y,z\in X$ to be pairwise distinct in Definition~\ref{d1} is essential. One can see that otherwise this definition is equivalent to the definition of contraction mapping.
\end{rem}

We shall say that $x_0$ is an accumulation point of the semimetric space $(X,d)$ if for every $\ve>0$ there exists $x\in X$, $x\neq x_0$, such that $d(x_0,x)\leqslant \ve$.\\

The subsequent proposition demonstrates that mappings contracting perimeters of triangles are continuous not only in ordinary metric spaces but also in more general semimetric spaces with triangle functions continuous at the origin.

\begin{prop}\label{p21}
Let $(X,d)$, $|X|\geqslant 3$, be a semimetric space with a triangle function $\Phi$ continuous at $(0,0)$  and let $T\colon X\to X$ be a mapping contracting perimeters of triangles on $X$. Then $T$ is continuous.
\end{prop}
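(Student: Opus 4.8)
The plan is to establish sequential continuity: I will show that whenever $y_n\to y_0$ (that is, $d(y_n,y_0)\to 0$), one has $d(Ty_n,Ty_0)\to 0$. The natural obstruction is that the defining inequality~(\ref{e1}) constrains the images only of \emph{pairwise distinct} triples, so to estimate $d(Ty_n,Ty_0)$ I must feed the contraction a third point distinct from both $y_n$ and $y_0$. I would therefore split the argument according to whether $y_0$ is isolated or an accumulation point. If $y_0$ is isolated, there is $\eta>0$ such that the only point within distance $\eta$ of $y_0$ is $y_0$ itself; then $y_n\to y_0$ forces $y_n=y_0$ for all large $n$, whence $Ty_n=Ty_0$ and continuity at $y_0$ is immediate. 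The substantive case is when $y_0$ is an accumulation point.

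In that case the first step is a purely topological observation: for an accumulation point $y_0$, every punctured ball $\{x\in X:0<d(y_0,x)<\delta\}$ contains \emph{infinitely many} points. Indeed, this set is nonempty by the accumulation property, and were it finite the minimum $r>0$ of the distances $d(y_0,x)$ over it would be undercut by a still-closer point $x\neq y_0$ furnished by the accumulation property applied with radius $r$, a contradiction. This guarantees that for each index $n$ with $y_n\neq y_0$ I can select a third point $z_n$ with $d(y_0,z_n)<\delta$ and $z_n\notin\{y_0,y_n\}$, so that $y_n,y_0,z_n$ are pairwise distinct and~(\ref{e1}) applies to them.

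Given $\ve>0$, I would then run the estimate as follows. Discarding the two nonnegative summands $d(Ty_0,Tz_n)$ and $d(Ty_n,Tz_n)$ on the left of~(\ref{e1}) leaves
$$
d(Ty_n,Ty_0)\leqslant \alpha\bigl(d(y_n,y_0)+d(y_0,z_n)+d(y_n,z_n)\bigr).
$$
Since the semimetric $d$ is \emph{not} assumed continuous here, I control the cross term through the triangle function rather than by passing to a limit: by~(\ref{tr}) and the monotonicity of $\Phi$, $d(y_n,z_n)\leqslant \Phi(d(y_n,y_0),d(y_0,z_n))\leqslant \Phi(\delta,\delta)$ as soon as $d(y_n,y_0)<\delta$. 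Hence, for all large $n$ (those with $d(y_n,y_0)<\delta$) satisfying $y_n\neq y_0$,
$$
d(Ty_n,Ty_0)\leqslant \alpha\bigl(2\delta+\Phi(\delta,\delta)\bigr),
$$
while for indices with $y_n=y_0$ the left-hand side is $0$. Because $\Phi(0,0)=0$ and $\Phi$ is continuous at $(0,0)$, the quantity $2\delta+\Phi(\delta,\delta)$ tends to $0$ as $\delta\to 0^+$, so I may fix $\delta$ small enough that $\alpha(2\delta+\Phi(\delta,\delta))<\ve$. This yields $d(Ty_n,Ty_0)<\ve$ for all large $n$, proving $Ty_n\to Ty_0$.

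The main obstacle is precisely the pairwise-distinctness requirement in Definition~\ref{d1}: the whole argument hinges on producing, for every relevant $n$, an auxiliary point $z_n$ distinct from both $y_n$ and $y_0$ and arbitrarily close to $y_0$, which is exactly what the accumulation-point case supplies and which the isolated-point case renders unnecessary. A secondary point demanding care is that, lacking continuity of $d$, the cross term $d(y_n,z_n)$ must be bounded via $\Phi$ and its monotonicity rather than by any limiting argument, and that one must separately dispose of the indices with $y_n=y_0$.
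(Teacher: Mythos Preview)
Your proof is correct and follows essentially the same route as the paper: split into isolated versus accumulation points, pick an auxiliary third point near $y_0$ to make the triple pairwise distinct, apply~(\ref{e1}), and bound the cross term via $\Phi$ and its continuity at the origin to reach $d(Ty_n,Ty_0)\leqslant\alpha(2\delta+\Phi(\delta,\delta))$. The only differences are cosmetic---you phrase continuity sequentially while the paper uses the $\varepsilon$--$\delta$ formulation---and you are more careful than the paper in justifying (via the infinitude of each punctured ball around an accumulation point) that the auxiliary point can be chosen distinct from \emph{both} $y_0$ and $y_n$.
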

\begin{proof}
Let $(X,d)$ be a metric space with $|X|\geqslant 3$, $T\colon X\to X$ be a mapping contracting perimeters of triangles on $X$ and let $x_0$ be an isolated point in $X$. Then, clearly, $T$ is continuous at $x_0$. Let now $x_0$ be an accumulation point. Let us show that for every $\ve>0$, there exists $\delta>0$ such that $d(Tx_0,Tx)<\ve$ whenever $d(x_0,x)<\delta$. Suppose that $x\neq x_0$, otherwise this assertion is evident.
Since $x_0$ is an accumulation point, for every $\delta>0$ there exists $y\in X$ such that $x_0\neq y\neq x$ and $d(x_0,y)< \delta$. Since the points $x_0$, $x$ and $y$ are pairwise distinct by~(\ref{e1}) we have
$$
d(Tx_0,Tx)\leqslant d(Tx_0,Tx)+d(Tx_0,Ty)+d(Tx,Ty)
$$
$$
\leqslant \alpha(d(x_0,x)+d(x_0,y)+d(x,y)).
$$
Using the triangle inequality $d(x,y) \leqslant \Phi(d(x_0,x),d(x_0,y))$ and monotonicity of $\Phi$, we get
$$
d(Tx_0,Tx)\leqslant \alpha(d(x_0,x)+d(x_0,y)+\Phi(d(x_0,x),d(x_0,y)))
\leqslant \alpha(2\delta+\Phi(\delta,\delta)).
$$
Since $\Phi$ is continuous at $(0,0)$ and $\Phi(0,0)=0$ we get that for every $\ve>0$ there exists $\delta>0$ such that the nequality
$ \alpha(2\delta+\Phi(\delta,\delta))<\ve$ holds, which completes the proof.
\end{proof}

Let $T$ be a mapping on the metric space $X$. A point $x\in X$ is called a \emph{periodic point of period $n$} if $T^n(x) = x$. The least positive integer $n$ for which $T^n(x) = x$ is called the prime period of $x$. In particular, the point $x$ is of prime period $2$ if $T(T(x))=x$ and $Tx\neq x$.\\

The following theorem is the main result of this section.
\begin{thm}\label{t1}
Let $(X,d)$, $|X|\geqslant 3$, be a complete semimetric space with the triangle function $\Phi$ continuous at $(0,0)$ and satisfying conditions~(\ref{ee1}) and~(\ref{ee2}) and let the mapping $T\colon X\to X$ satisfy the following two conditions:
\begin{itemize}
  \item [(i)] $T$ does not possess periodic points of prime period $2$.
  \item [(ii)] $T$ is a mapping contracting perimeters of triangles on $X$.
\end{itemize}
Then $T$ has a fixed point. The number of fixed points is at most two.
\end{thm}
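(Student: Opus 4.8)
The plan is to run the Picard iteration $x_n = T^n x_0$ from an arbitrary $x_0 \in X$ and to show that it is a Cauchy sequence whose limit is a fixed point, after which I bound the number of fixed points directly. First I would dispose of the degenerate case: if $x_i = x_{i+1}$ for some $i$, then $T x_i = x_i$ and a fixed point is already at hand, so I may assume $x_n \neq x_{n+1}$ for every $n$.

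The crucial observation is that, under this assumption, the three points $x_{n-1}, x_n, x_{n+1}$ are pairwise distinct. Indeed, $x_{n-1}\neq x_n$ and $x_n\neq x_{n+1}$ hold by assumption, while $x_{n-1}=x_{n+1}$ would give $T^2 x_{n-1}=x_{n-1}$ together with $Tx_{n-1}=x_n\neq x_{n-1}$, i.e. $x_{n-1}$ would be a periodic point of prime period $2$, contradicting condition~(i); this is exactly where hypothesis~(i) is used. Having pairwise distinct points, I apply the perimeter contraction~(\ref{e1}) to $x_{n-1},x_n,x_{n+1}$. Writing $P_n := d(x_n,x_{n+1})+d(x_{n+1},x_{n+2})+d(x_n,x_{n+2})$ for the perimeter of the $n$-th triangle, this yields $P_n\leqslant \alpha P_{n-1}$, hence $P_n\leqslant \alpha^n P_0$, and in particular $d(x_n,x_{n+1})\leqslant P_n\leqslant \alpha^n P_0$.

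I expect the main obstacle to be the passage from this estimate to the Cauchy property: the perimeter contraction does not deliver the edge estimate $d(x_n,x_{n+1})\leqslant \alpha\, d(x_{n-1},x_n)$ that is the literal hypothesis of Lemma~\ref{lem}, only the geometric bound $d(x_n,x_{n+1})\leqslant \alpha^n P_0$. The remedy is to notice that the proof of Lemma~\ref{lem} uses its hypothesis solely to derive inequality~(\ref{e31n}), and that every later step---the iterated generalized triangle inequality, the homogeneity~(\ref{ee1}), and the uniform bound~(\ref{ee2})---depends only on a geometric estimate of that shape. Since I have established such an estimate with $P_0$ in place of $d(x_0,x_1)$, I can repeat the computations in parts~2--4 of the proof of Lemma~\ref{lem} verbatim to obtain $d(x_n,x_{n+p})\leqslant \alpha^n C(\alpha) P_0\to 0$, so $(x_n)$ is Cauchy and, by completeness, converges to some $x^*\in X$.

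To identify $x^*$ as a fixed point I would use that $T$ is continuous by Proposition~\ref{p21}, so $x_{n+1}=Tx_n\to Tx^*$; the generalized triangle inequality then gives $d(x^*,Tx^*)\leqslant \Phi(d(x^*,x_{n+1}),d(x_{n+1},Tx^*))$, and letting $n\to\infty$, where both arguments tend to $0$ and $\Phi$ is continuous at $(0,0)$, forces $d(x^*,Tx^*)=0$, i.e. $Tx^*=x^*$. Finally, for the bound on the number of fixed points I would argue by contradiction: if $x,y,z$ were three distinct fixed points, applying~(\ref{e1}) to them would give $d(x,y)+d(y,z)+d(x,z)\leqslant \alpha\big(d(x,y)+d(y,z)+d(x,z)\big)$ with a strictly positive perimeter, forcing $\alpha\geqslant 1$, a contradiction. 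Hence $T$ has at most two fixed points.
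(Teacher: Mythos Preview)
Your proposal is correct and follows essentially the same route as the paper's proof: run the Picard iteration, use condition~(i) to ensure consecutive triples $x_{n-1},x_n,x_{n+1}$ are pairwise distinct, apply~(\ref{e1}) to get a geometric perimeter bound and hence $d(x_n,x_{n+1})\leqslant \alpha^n P_0$, feed this into the proof of Lemma~\ref{lem} from step~(\ref{e31n}) onward to obtain the Cauchy property, identify the limit as a fixed point via Proposition~\ref{p21} and continuity of $\Phi$ at the origin, and rule out three distinct fixed points directly from~(\ref{e1}). The paper inserts one additional step you omit---showing that \emph{all} iterates $x_i$ are pairwise distinct by observing that a repetition would force two equal perimeters in the strictly decreasing sequence $(p_n)$---but, as your argument implicitly recognizes, this is not needed for the Cauchy estimate and can be dropped.
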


\begin{proof}
Let $x_0\in X$, $Tx_0=x_1$, $Tx_1=x_2$, \ldots, $Tx_n=x_{n+1}$, \ldots. Suppose first that $x_i$ is not a fixed point of the mapping $T$ for every $i=0,1,...$. Let us show that all $x_i$ are different. Since $x_i$ is not fixed, then $x_i\neq x_{i+1}=Tx_i$. By condition (i) $x_{i+2}=T(T(x_i))\neq x_i$ and by the supposition that $x_{i+1}$ is not fixed we have $x_{i+1}\neq x_{i+2}=Tx_{i+1}$. Hence, $x_i$, $x_{i+1}$ and $x_{i+2}$ are pairwise distinct. Further, set
$$
p_0=d(x_0,x_1)+d(x_1,x_2)+d(x_2,x_0),
$$
$$
p_1=d(x_1,x_2)+d(x_2,x_3)+d(x_3,x_1),
$$
$$
\cdots
$$
$$
p_n=d(x_n,x_{n+1})+d(x_{n+1},x_{n+2})+d(x_{n+2},x_n),
$$
$$
\cdots .
$$
Since  $x_i$, $x_{i+1}$ and $x_{i+2}$ are pairwise distinct by~(\ref{e1}) we have $p_1\leqslant \alpha p_0$, $p_2\leqslant \alpha p_1$, \ldots, $p_n\leqslant \alpha p_{n-1}$ and
\begin{equation}\label{e2}
p_0>p_1>...>p_n>\ldots .
\end{equation}
Suppose now that $j\geqslant 3$ is a minimal natural number such that $x_j=x_i$ for some $i$ such that $0\leqslant i<j-2$. Then  $x_{j+1}=x_{i+1}$, $x_{j+2}=x_{i+2}$. Hence, $p_i=p_j$ which contradicts to~(\ref{e2}). Thus, all $x_i$ are different.

Further, let us show that $(x_i)$ is a Cauchy sequence.  It is clear that
$$
d(x_0,x_1)\leqslant p_0,
$$

$$
d(x_1,x_2)\leqslant p_1\leqslant \alpha p_0,
$$

$$
d(x_2,x_3)\leqslant p_2\leqslant \alpha p_1\leqslant  \alpha^2 p_0,
$$

$$
\cdots
$$

\begin{equation*}
d(x_{n-1},x_{n})\leqslant p_{n-1}\leqslant \alpha^{n-1} p_0,
\end{equation*}

\begin{equation}\label{e311}
d(x_{n},x_{n+1})\leqslant p_{n}\leqslant \alpha^{n} p_0,
\end{equation}
$$
\cdots .
$$
Comparing~(\ref{e311}) with~(\ref{e31n}) and using the proof of Lemma~\ref{lem} we get that
$(x_n)$ is a Cauchy sequence. By completeness of $(X,d)$, this sequence has a limit $x^*\in X$.

Let us prove that $Tx^*=x^*$. Since $x_n\to x^*$, by continuity of $T$ we have $x_{n+1}=T x_n\to Tx^*$.  By triangle inequality~(\ref{tr}) and continuity of $\Phi$ at $(0,0)$ we have
$$
d(x^*,Tx^*)\leqslant \Phi(d(x^*,x_{n}),d(x_{n},Tx^*))
\to 0
$$
as $n\to \infty$, which means that $x^*$ is the fixed point.

Suppose that there exist at least three pairwise distinct fixed points $x$, $y$ and $z$.  Then $Tx=x$, $Ty=y$ and $Tz=z$, which contradicts to~(\ref{e1}).
\end{proof}

\begin{cor}\label{c338}
Theorem~\ref{t1} holds for semimetric spaces with the following triangle functions: $\Phi(u,v)=u+v$; $\Phi(u,v)=K(u+v)$, $K \geqslant 1$; $\Phi(u,v)=\max\{u,v\}$; $\Phi(u,v)=(u^q+v^q)^{\frac{1}{q}}$, $q>0$, with the corresponding estimations~(\ref{ee5}) from above for $d(x_n,x^*)$.
\end{cor}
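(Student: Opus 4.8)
The plan is to notice that Theorem~\ref{t1} imposes only three demands on the triangle function: continuity at the origin $(0,0)$, the homogeneity identity~(\ref{ee1}), and the uniform bound~(\ref{ee2}). In contrast to Theorems~\ref{t7},~\ref{t08} and~\ref{t8}, no auxiliary inequality of the type $\Phi(0,\cdot)<1$ has to be checked here, because the fixed point in this setting is produced through the continuity of $T$ (Proposition~\ref{p21}) together with continuity of $\Phi$ at $(0,0)$, rather than through an algebraic cancellation. Hence the corollary collapses to verifying those three properties for each of the four displayed functions, and every such verification is already contained in the earlier corollaries of the paper.

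Concretely, I would invoke Corollary~\ref{c33} for $\Phi(u,v)=u+v$, Corollary~\ref{c34} for $\Phi(u,v)=\max\{u,v\}$, Corollary~\ref{c35} for $\Phi(u,v)=(u^q+v^q)^{1/q}$ with $q>0$, and Corollary~\ref{c36} for $\Phi(u,v)=K(u+v)$. In each of those proofs it was checked that $\Phi$ is continuous at $(0,0)$ and obeys~(\ref{ee1}), and that the nested expression in~(\ref{ee2}) reduces to a finite geometric sum (or, in the ultrametric case, to a maximum) bounded uniformly in $p$ by the explicit constants $1/(1-\alpha)$, $1$, $(1-\alpha^q)^{-1/q}$, and $K/(1-\alpha K)$, respectively. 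Once these facts are in hand, Theorem~\ref{t1} applies verbatim and yields a fixed point together with the conclusion that at most two fixed points exist. The a~priori estimate~(\ref{ee5}) likewise carries over in the cases where the semimetric $d$ is continuous (metric, ultrametric, and power triangle functions with $q\geqslant 1$), with the contraction rate $\alpha$ read off from the perimeter bound~(\ref{e311}) and the initial displacement replaced by the starting perimeter $p_0$.

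The step that needs the most care, and the main obstacle, is the b-metric function $\Phi(u,v)=K(u+v)$. There the geometric progression controlling~(\ref{ee2}) has common ratio $\alpha K$, so~(\ref{ee2}) is valid only under the tacit restriction $\alpha K<1$, where $\alpha$ is now the perimeter-contraction coefficient of Definition~\ref{d1}; this is precisely the situation of Corollary~\ref{c36}. For the three remaining functions no such restriction is needed, since their series ratios are $\alpha$, $0$, and $\alpha^q$, all already below $1$ whenever $\alpha<1$. Thus, with $\alpha K<1$ understood in the b-metric case, all hypotheses of Theorem~\ref{t1} are satisfied for every listed $\Phi$, which completes the argument.
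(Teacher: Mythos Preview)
Your proposal is correct and follows exactly the pattern the paper intends: the paper provides no separate proof for this corollary, leaving it to stand on the verifications already carried out in Corollaries~\ref{c33}--\ref{c36}, in the same spirit as the one-line proof of Corollary~\ref{c334}. Your added remarks---that the $b$-metric case tacitly requires $\alpha K<1$ for~(\ref{ee2}) to hold, and that in estimate~(\ref{ee5}) the quantity $d(x_0,x_1)$ should be replaced by the initial perimeter $p_0$ from~(\ref{e311})---go beyond what the paper states but are accurate and useful clarifications.
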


{The following example shows that Condition (i) in Theorem~\ref{t1} is necessary.}
\begin{ex}
{Let us construct an example of the mapping $T$ contracting perimeters of triangles which does not have any fixed point. Let $X=\{x,y,z\}$, $d(x,y)=d(y,z)=d(x,z)=1$, and let $T\colon X\to X$ be such that $Tx=y$, $Ty=x$ and $Tz=x$.
In this case the points $x$ and $y$ are periodic points of prime period 2.
}\end{ex}

\section{Applications}

Fixed point theorems offer a robust framework for comprehending and addressing the solutions to linear and nonlinear problems that arise in biological, engineering, and physical sciences.

In Chapter 6 of Subramaniyam's monograph \cite{Su18}, various applications of the contraction principle are explored. These applications span domains including Fredholm and Volterra integral equations, existence theorems for initial value problems of first-order ordinary differential equations (ODEs), solutions of second-order ODE boundary value problems (BVPs), functional differential equations, discrete BVPs, a variety of functional equations, commutative algebra and  fractals (see also \cite{Ki01},\cite{AJS18}, \cite{Ma75}, \cite{KS13} and references therein).

In its multifaceted nature, fixed point theorems play a pivotal role in analyzing solutions to nonlinear partial differential equations (PDEs). Notably, Brouwer's, Schauder's, and Schaefer's fixed point theorems, among others, have emerged as powerful tools for ensuring the existence and uniqueness of solutions across a diverse spectrum of nonlinear PDEs (see ~\cite{Al19}, \cite{AH76} and references therein).

\section{Conclusion and future research directions}
In summary, our paper has revisited numerous renowned fixed-point theorems, providing extensions by adjusting assumptions and introducing innovative proofs. Utilizing Lemma~\ref{lem} and its corollary, we have gained further insights into the essence of fixed-point theorems, broadening their relevance beyond metric spaces to encompass more general scenarios. This investigation indicates promising directions for future research, especially concerning the application of our approach to other contractive mappings across diverse conditions. Additionally, exploring real-world applications in light of established results offers intriguing possibilities for addressing various practical problems across different settings.

\section*{Conflict of Interest Statement}

The authors declare that the research was conducted in the absence of any commercial or financial relationships that could be construed as a potential conflict of interest.

\section*{Author Contributions}


All authors listed have made a substantial, direct, and intellectual contribution to the work and approved it for publication.

\section*{Funding}

The first author was partially supported by the Volkswagen Foundation grant within the frameworks of the international project ``From Modeling and Analysis to Approximation''.
This work was also partially supported by a grant from the Simons Foundation (Award 1160640, Presidential Discretionary-Ukraine Support Grants, E. Petrov).


\section*{Data Availability Statement}

Data sharing is not applicable to this article as no new data were created or analyzed in this study.



\end{document}